\documentclass[reqno,10pt]{amsart}
\usepackage{amscd,amssymb,amsmath,color}
\usepackage{latexsym}
\usepackage[all]{xy}
\usepackage{defs}
\usepackage[colorlinks=true]{hyperref}

\def\..{{,\dots,}}

\begin{document}

\author{Michael Temkin}
\title{Altered local uniformization of Berkovich spaces}

\address{Einstein Institute of Mathematics, The Hebrew University of Jerusalem, Giv'at Ram, Jerusalem, 91904, Israel}
\email{temkin@math.huji.ac.il}
\keywords{Local uniformization, semistable reduction, Berkovich analytic spaces.}
\thanks{This work was supported by the Israel Science Foundation (grant No. 1018/11).}
\begin{abstract}
We prove that for any compact quasi-smooth strictly $k$-analytic space $X$ there exist a finite extension $l/k$ and a quasi-\'etale covering $X'\to X\otimes_kl$ such that $X'$ possesses a strictly semistable formal model. This extends a theorem of U. Hartl to the case of the ground field with a non-discrete valuation.
\end{abstract}

\maketitle

\section{Introduction}

\subsection{Motivation}
In \cite[Theorem~1.4]{Hartl}, U. Hartl proved that if $X$ is a quasi-compact and quasi-separated smooth rigid space over a complete discretely-valued field $k$ then there exist a finite extension $l/k$ and an \'etale covering $X'\to X\otimes_kl$ such that $X'$ is affinoid and possesses a strictly semistable formal model. Hartl's proof involves an advanced rigid-analytic technique, and as a drawback one has to impose the assumption that $k$ is discretely-valued. Nevertheless, already in this form the theorem had applications, for example, to representability of the rigid-analytic Picard functor.

Recently, the author incorrectly used Hartl's result in \cite{Temkintopforms} to control the maximality locus of pluricanonical forms on quasi-smooth Berkovich analytic spaces over a ground field $k$ of residue characteristic zero. If $k$ is discretely-valued then one even has the semistable reduction theorem available, so the whole point was to deal with the non-discrete case, and I am grateful to W. Gubler for pointing out the limiting hypothesis I missed.

The aim of this paper is to extend Hartl's theorem to arbitrary complete real-valued ground fields, see Theorem~\ref{berth}. In particular, this suffices for applications in \cite{Temkintopforms}. In addition, our method is easier than the original proof.

\begin{rem}
(i) The author conjectures that one can even achieve that $X'$ is a disjoint union of affinoid domains in $X\otimes_kl$ and calls this the local uniformization conjecture for Berkovich spaces using an analogy with the classical local uniformization conjecture for varieties. In particular, it is natural to call Theorem~\ref{berth} altered (or weak) local uniformization of Berkovich spaces.

(ii) Both our strengthening of Hartl's theorem and the local uniformization conjecture over a non-discretely valued field $k$ may look surprising because analogous global conjectures only predict existence of logarithmically smooth formal models rather than semistable ones. So, let us illustrate the situation with a simple example. If non-zero elements $\pi,\omega\in\kcirc$ are such that $\log|\pi|$ and $\log|\omega|$ are linearly independent then the product $X=\calM(k\{t,\pi t^{-1},s,\omega s^{-1}\})$ of annuli of radii $|\pi|$ and $|\omega|$ has no semistable formal model. This can be easily deduced from the fact that the skeleton $\Delta=[\log|\pi|,0]\times[\log|\omega|,0]$ of $X$ cannot be triangulated using triangles with rational slopes. (A similar example of Karu, \cite[Section~0.4]{Abramovic-Karu}, shows that in the weak semistable modification theorem of Abramovich-Karu, \cite[Theorem~0.3]{Abramovic-Karu}, one has to weaken the usual notion of semistability.) Nevertheless, if we allow overlaps then $\Delta$ can be easily covered by such triangles, and this allows to construct a covering of $X$ by affinoid domains with semistable reduction.
\end{rem}

\subsection{The method}
The problem immediately reduces to the following formal version: given a rig-smooth formal scheme $\gtX$ over the ring of integers $\kcirc$ find a finite extension $l/k$ and a rig-\'etale covering $\gtX'\to\gtX\otimes_{\kcirc}\lcirc$ such that $\gtX'$ is strictly semistable. Furthermore, $\gtX$ is locally algebraizable by Elkik's theorem, hence the problem reduces to the following algebraic version: if $S=\Spec(\kcirc)$ and $X$ is a flat generically smooth $S$-scheme of finite type then there exists a finite extension $l/k$ and a covering $X'\to X\otimes_{\kcirc}\lcirc$ with a strictly semistable $X'$. This time we work with the topology of certain generically \'etale coverings, see Section~\ref{topsec}.

The algebraic version of the theorem is proved by fibering the scheme $X$ in curves and inducting on the dimension. This approach is inherent for de Jong's proof of the alteration theorem and its numerous successors, including Hartl's theorem and various desingularization results of Gabber, such as the $l$'-alteration theorem in \cite{X}. To run the induction step one has to somehow resolve a relative curve $X\to Y$. In all mentioned results, one uses de Jong's approach via the moduli spaces of $n$-pointed stable curves, although one has to by-pass some troubles due to the need to compactify the curve. For example, Hartl mentions in the end of the introduction to \cite{Hartl} that his method has to use rigid-analytic technique because otherwise the compactification would not be possible.

The main novelty of our method is in the use of the stable modification theorem of \cite{temst} and \cite{temrz} instead of the moduli space approach. This theorem applies to arbitrary, even non-separated, relative curves and produces a canonical semistable modification $X'\to X\times_YY'$, where $g\:Y'\to Y$ is a sufficiently large covering for a suitable topology ($U$-\'etale coverings in the sense of \cite[\S2.3]{temrz}). In addition, it provides a control on the base change morphism $g$. Namely, if $X\to Y$ is smooth over $U\subseteq Y$ then one can choose $g$ to be \'etale over $U$. It is the latter refinement that allows us to ensure that the morphism $X'\to X$ is \'etale over the generic point of $S$. The remaining argument is pretty standard so we do not discuss it here.

\subsection{Structure of the paper}
Section \ref{algsec} is devoted to the algebraic version of our main result, which is proved in Theorem~\ref{mainth}. The formal and non-archimedean versions are deduced in Section~\ref{unifsec}, see Theorems~\ref{formalth} and \ref{berth}.

\subsection{Future research}\label{future}
The goal of this paper is to prove Theorem \ref{berth} in the most economical way, so we postpone a more systematic study to another paper. Here we only note that the method can be strengthened similarly to Gabber's $l$'-alteration theorem. Moreover, if the residue characteristic is zero, one can even establish the actual local uniformization of $X$ in this way.

\subsection{Acknowledgments}
I am grateful to the anonymous referee for pointing out gaps and inaccuracies in the first version of the paper.

\section{The algebraic case}\label{algsec}

\subsection{Conventiones}

\subsubsection{The valued field}
Throughout Section \ref{algsec}, $k$ denotes a valued field of height one. By $|\ |\:k\to\bfR_{\ge 0}$, $\kcirc$, $\kcirccirc$, and $\tilk=\kcirc/\kcirccirc$ we denote the valuation, the valuation ring of $k$, the maximal ideal of $\kcirc$, and the residue field of $k$, respectively.

\subsubsection{The base scheme}\label{Ssec}
The scheme $S=\Spec(\kcirc)$ consists of two points: $\eta=\Spec(k)$ and $s=\Spec(\tilk)$. We have the generic fiber functor $X\mapsto X_\eta=X\times_S\eta$ from $S$-schemes to $k$-schemes. By $X_s$ we denote the closed fiber of $X$. An $S$-morphism $f$ is called {\em $\eta$-\'etale}, {\em $\eta$-isomorphism}, etc., if its generic fiber $f_\eta$ is so. We say that $f$ is an {\em $\eta$-modification} if it is a proper {\em $\eta$-isomorphism}. A typical example of an $\eta$-modification is an {\em admissible blow-up}, i.e. a blow-up whose center is disjoint from $X_\eta$.

\subsection{The $\eta$-\'etale $S$-admissible topology}

\subsubsection{Flat $S$-schemes}
For an $S$-scheme $X$ the following conditions are equivalent: (i) $X$ is $S$-flat, (ii) $\calO_X$ has no $\kcirc$-torsion, (iii) $X_\eta$ is schematically dense in $X$. To any $S$-scheme $X$ one can associate the flat $S$-scheme $X^\st$ which is the schematic closure of $X_\eta$ in $X$ (it can be viewed as an analogue of strict transform.) Since the functor $X\mapsto X^\st$ is left adjoint to the embedding of the category of flat $S$-schemes into the category of $S$-schemes, fibered product exist in the category of flat $S$-schemes and are given by the formula $(Y\times_XZ)^\st$. We say that $(Y\times_XZ)^\st\to Z$ is the {\em strict base change} of $Y\to X$.

\subsubsection{Admissible $S$-schemes}
We say that an $S$-scheme $X$ is {\em admissible} if it is flat and of finite type over $S$. Note that in this case $X$ is automatically of finite presentation over $S$ by \cite[Part I, Corollary~3.4.7]{RG}. By an {\em admissible valuation} of an $S$-admissible scheme $X$ we mean a morphism $\alp\:\Spec(R)\to X$, where $R$ is a valuation ring and $\alp$ takes the generic point to $X_\eta$.

\subsubsection{Topology $\tau$}\label{topsec}
We provide the category of admissible $S$-schemes with the following topology $\tau$ that we call the {\em $\eta$-\'etale $S$-admissible topology}: an $S$-morphism $f\:Y\to X$ is a $\tau$-covering if it is $\eta$-\'etale and satisfies the following lifting property: (*) any admissible valuation $\alp\:\Spec(R)\to X$ with an algebraically closed $\Frac(R)$ factors through (or lifts to) an admissible valuation $\beta\:\Spec(R)\to Y$. It is easy to see that $\tau$-coverings are preserved by compositions and strict base changes and hence, indeed, form a Grothendieck topology. In fact, this topology was already considered in \cite{temrz}:

\begin{rem}
Recall that $f\:Y\to X$ is an {\em $X_\eta$-\'etale covering} in the sense of \cite[\S2.3]{temrz} if it is $\eta$-\'etale and for any admissible valuation $\alp\:\Spec(R)\to X$ there exists a domination of valuation rings $R\subseteq R'$ such that $\Frac(R')/\Frac(R)$ is finite and the composed valuation $\alp'\:\Spec(R')\to X$ factors through $Y$. Plainly, any $X_\eta$-\'etale covering is a $\tau$-covering. The converse is also true and easy but will not be used, so we leave it to the interested reader.
\end{rem}

\subsubsection{An alternative description of $\tau$}
By the valuative criterion of properness, any $\eta$-modification $X'\to X$ is a $\tau$-covering. Also, it is easy to see that any flat surjective $\eta$-\'etale morphism $f\:Y\to X$ is a $\tau$-covering. Indeed, assume that $\alp\:T=\Spec(R)\to X$ is an admissible valuation and $K=\Frac(R)$ is algebraically closed. Being flat and $\eta$-\'etale $f$ is quasi-finite, and hence the base change $g\:T'=T\times_XY\to T$ is a flat quasi-finite covering. It suffices to show that $g$ has a section $s\:T\to T'$ since then the composition $T\to T'\to Y$ provides a lifting of $\alp$. To find $s$ we can replace $T'$ by its reduction. Next, choose a point $t'$ over the closed point of $T$. Then $\calO_{T',t'}$ is a reduced local ring dominating $R$. Since $K$ is algebraically closed, $\Frac(\calO_{T',t'})=K$ and hence $\calO_{T',t'}=R$, giving rise to the section $T=\Spec(\calO_{T',t})\into T'$.

We claim that, conversely, $\tau$ is generated by these two types of coverings. In fact, we even have the following more precise result.

\begin{lem}\label{coverlem}
Assume that $f\:Y\to X$ is a $\tau$-covering. Then there exists an $\eta$-modification $Y'\to Y$ such that $Y'\to X$ factors as a composition of a flat surjective $\eta$-\'etale morphism $Y'\to X'$ and an admissible blow-up $X'\to X$.
\end{lem}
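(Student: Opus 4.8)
The plan is to flatten $f$ by an admissible blow-up of $X$ and then to observe that the flattened morphism is automatically surjective because it still enjoys the lifting property~(*). First, since $f$ is $\eta$-\'etale it is flat over the quasi-compact open subscheme $X_\eta\subseteq X$, so I would invoke the flattening theorem of Raynaud--Gruson \cite[Part~I]{RG} to obtain an admissible blow-up $X'\to X$ such that the strict transform $Y':=(Y\times_XX')^\st$ is flat over $X'$; here $X'$ is itself $S$-flat, since a blow-up of the $S$-flat scheme $X$ along a closed subscheme is $S$-flat (the powers of the center's ideal sheaf are $\kcirc$-torsion free subsheaves of $\calO_X$, hence $\kcirc$-flat). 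An admissible blow-up is an $\eta$-isomorphism, so $X'_\eta=X_\eta$ and hence $Y'_\eta=(Y\times_XX')_\eta=Y_\eta$; thus $Y'\to Y$ is an $\eta$-isomorphism which is also proper, being a closed immersion into $Y\times_XX'$ followed by the base change $Y\times_XX'\to Y$ of the proper morphism $X'\to X$, so it is an $\eta$-modification, while $Y'\to X'$ restricts over $\eta$ to the \'etale morphism $f_\eta$ and so is $\eta$-\'etale. It therefore remains to prove that $f'\:Y'\to X'$ is surjective.

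I would deduce surjectivity from the claim that $f'$ inherits the lifting property~(*) from $f$. Let $\alp\:\Spec(R)\to X'$ be an admissible valuation with $\Frac(R)$ algebraically closed. The generic point of $\Spec(R)$ is sent into $X'_\eta$, hence over $\eta\in S$, so the homomorphism $\kcirc\to R$ is injective and $R$ is $\kcirc$-torsion free; thus $R$ is flat over $\kcirc$ and $\Spec(R)$ is a flat $S$-scheme. The composition $\Spec(R)\to X'\to X$ is an admissible valuation of $X$ with algebraically closed fraction field, so by~(*) for $f$ it lifts to $\beta\:\Spec(R)\to Y$. Combined with $\alp$, this gives a morphism $\Spec(R)\to Y\times_XX'$, and since $\Spec(R)$ is a flat $S$-scheme this morphism factors through $Y'=(Y\times_XX')^\st$ by the adjunction recalled above (explicitly: the generic point of $\Spec(R)$, its only associated point, is sent into $(Y\times_XX')_\eta$, so the schematic image of $\Spec(R)$ is contained in the schematic closure of $(Y\times_XX')_\eta$). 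Composing with the projection $Y'\to X'$ recovers $\alp$, so $f'$ satisfies~(*).

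Finally, given $x\in X'$, the $S$-flatness of $X'$ ensures that $X'_\eta$ is dense in $X'$ and that $x$ is a specialization of some $\xi\in X'_\eta$; realizing this specialization by a valuation ring $R_0\subseteq\kappa(\xi)$ dominating $\calO_{\overline{\{\xi\}},x}$ and then passing to a valuation ring $R$ of an algebraic closure of $\kappa(\xi)$ dominating $R_0$ produces an admissible valuation $\Spec(R)\to X'$ with algebraically closed fraction field whose closed point maps to $x$; by~(*) it lifts to $Y'$, so $x$ lies in the image of $f'$. Hence $f'$ is surjective and the lemma follows. The only substantial ingredient is the Raynaud--Gruson flattening theorem; everything else is a formal game with admissible valuations, whose one point worth isolating is that every admissible valuation ring is automatically $\kcirc$-flat --- this is exactly what makes the strict transform $Y'$ compatible with the lifting property~(*), and is where I expect the only real subtlety to lie.
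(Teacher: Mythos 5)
Your proof is correct and follows the same overall strategy as the paper: flatten $f$ by a Raynaud--Gruson admissible blow-up and then deduce surjectivity of the flattened morphism from the lifting property~(*) via admissible valuations with algebraically closed fraction field hitting a prescribed point of $X'$. The only divergence is in how the lift lands in $Y'$: the paper lifts the valuation first to $Y$, then to $Y'$ by properness of the $\eta$-modification $Y'\to Y$, and then needs a separate uniqueness-of-liftings argument (via properness of the blow-up) to see that the induced map to $X'$ agrees with the given one; you instead pair the lift to $Y$ directly with the given valuation on $X'$ and factor through $Y'=(Y\times_XX')^{\mathrm{st}}$ using the flatness of the valuation ring over $\kcirc$ and the universal property of the strict transform, which sidesteps the uniqueness step and in fact shows that $f'$ is itself a $\tau$-covering. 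Both arguments are sound; yours is marginally more self-contained given the adjunction the paper has already set up.
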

\begin{proof}
By the flattening theorem of Raynaud and Gruson, see \cite[Part I, Th\'eor\`em~5.7.9]{RG}, there exists an admissible blow-up $g\:X'=\Bl_\calI(X)\to X$ such that the strict base change $Y'$ of $Y$ is flat over $X'$. The morphism $f'\:Y'\to X'$ is flat and has the same generic fiber as $f$. In addition, the morphism $Y'\to Y$ is clearly an $\eta$-modification (in fact, it is easy to see that it is even the admissible blow-up along $f^{-1}\calI$). So, it remains to show that $f'$ is surjective.

Fix a point $x'\in X'$. Since $X'$ is admissible it is easy to see that there exists an admissible valuation $\alp'\:T=\Spec(R)\to X'$ sending the closed point $t\in T$ to $x'$. Clearly, we can also achieve that $\Frac(R)$ is algebraically closed. Let $\alp$ be the composition $T\to X'\to X$. We claim that any lifting $\alp''\:T\to X'$ of $\alp$ coincides with $\alp'$. Indeed, by the properness of $g$ it suffices to check that $\alp''(\veps)=\alp'(\veps)$ for the generic point $\veps\in T$, but this is clear since $\alp'(\veps)\in X'_\eta$ and $X'_\eta=X_\eta$.

Since $f$ is a $\tau$-covering, $\alp$ lifts to a valuation $\beta\:T\to Y$, which further lifts to $\beta'\:T\to Y'$ by the properness of $Y'\to Y$. We showed that $\alp'$ is the only lifting of $\alp$ and hence the lifting $\beta'$ of $\alp$ is also a lifting of $\alp'$. Thus $f'(\beta'(t))=\alp'(t)=x'$ and so the fiber $f'^{-1}(x')$ is non-empty.
\end{proof}

\begin{rem}
(i) We chose to work with the topology $\tau$, but there are a few more nearly equivalent choices that would fit our aims equally well. It seems that none of them was developed enough in the literature in the non-noetherian setting, so we chose the one that required minimal preparation. For example, an alternative choice was to use the topology of alterations in the sense of \cite{II} with the restriction that all coverings are $\eta$-\'etale. It is known to experts that the topology of alterations works fine in the non-noetherian setting when there are finitely many generic points (which is the case for admissible $S$-schemes), see \cite[Remark~1.2.4(ii)]{II}.

(ii) There are various refinements of $\tau$. For example, one can restrict the degrees of the extensions $[k(y):k(f(y))]$ for generic points $y\in Y$. Some of our results hold for finer topologies, but we will not explore this direction in the paper.
\end{rem}

\subsection{Local structure of semistable $S$-schemes}

\subsubsection{Nodal curves}\label{nodalsec}
Let $f\:Y\to X$ be a finitely presented morphism. Recall that $f$ is called a {\em nodal} or {\em semistable} curve if it is flat and all geometric fibers are nodal curves in the sense that any singularity of such a fiber (if exists) is an ordinary double point. It is well known that $f$ is a nodal curve if and only if \'etale-locally it is isomorphic to a morphism of the form $\Spec(A[x,y]/(xy-\pi))\to\Spec(A)$ where $\pi\in A$. For example, by the approximation technique of \cite[$\rm IV_3$, Section~8]{ega} this immediately reduces to the case when $X$ is of finite type over $\bfZ$ and then one can deduce this from \cite[2.23]{dJ} since working \'etale-locally one can split both the curve and the quadratic form $Q$ in loc.cit.

\subsubsection{Semistable $S$-schemes}
A higher-dimensional semistability can be defined for all morphisms but we restrict to $S$-schemes. Recall that an $S$-scheme $X$ is called {\em strictly semistable} (resp. {\em semistable}) if locally (resp. \'etale-locally) it admits an \'etale morphism to a {\em model semistable $S$-scheme}
\begin{equation}\label{eq}
\Spec(\kcirc[t_0\..t_l]/(t_0\dots t_m-\pi)),
\end{equation}
where $0\neq\pi\in\kcirc$ and $0\le m\le l$.

\begin{rem}
The condition $\pi\neq 0$ is imposed in order to guarantee that the generic fiber of $X$ is smooth. There is an alternative definition where $\pi\in\kcirc$ is arbitrary, and then $X_\eta$ can be a normal crossings variety over $k$.
\end{rem}

\subsubsection{A local description}
For a model semistable $S$-scheme $Z$ as in (\ref{eq}) let $O$ denote the point where $t_0\..t_l$ vanish. We call $O$ the {\em origin} of $Z$. The following result is well-known but hard to find in the literature, so we indicate a proof.

\begin{lem}\label{loclem}
Assume that $k$ is algebraically closed. Then for any strictly semi\-stable $S$-scheme $X$ with a closed point $x\in X_s$ there exists an \'etale morphism $f\:U\to Z$ such that $U$ is a neighborhood of $x$, $Z$ is a model semistable $S$-scheme and $f(x)$ is the origin of $Z$.
\end{lem}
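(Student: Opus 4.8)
The plan is to reduce the local structure of a strictly semistable $S$-scheme $X$ at $x$ to the model case by peeling off, in the \'etale topology, the ``extra'' smooth directions that distinguish the given local \'etale chart from the origin of a model semistable scheme. By definition there exists a Zariski-open neighborhood $U_0$ of $x$ together with an \'etale morphism $g\:U_0\to Z_0=\Spec(\kcirc[t_0\..t_l]/(t_0\dots t_m-\pi))$. The point is that $g(x)$ need not be the origin $O$ of $Z_0$: only some of the coordinates $t_0\..t_m$ vanish at $g(x)$, and some of the coordinates $t_{m+1}\..t_l$ may vanish as well. After renumbering we may assume $g(x)$ lies in the stratum where exactly $t_0\..t_{m'}$ vanish (with $m'\le m$) among the ``chain'' variables, and among the free variables $t_{m+1}\..t_l$ some subset vanishes. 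Inverting the coordinates that are nonzero at $g(x)$ gives, after shrinking $U_0$, a factorization through a localization of $Z_0$; the key elementary computation is that such a localization of a model semistable scheme is again (\'etale over, in fact Zariski-locally isomorphic to a product of $\bfA^1$ with) a model semistable scheme, now with the chain length reduced to $m'$, since setting $t_i$ invertible for $i>m'$ in the equation $t_0\dots t_m=\pi$ lets us absorb those units and rewrite it as $t_0\dots t_{m'}=\pi'$ with $0\ne\pi'\in\kcirc$ (here one uses $k$ algebraically closed, or at least that $\kcirc$ is large, only to keep $\pi'$ in $\kcirc$ rather than in a localization — actually $\pi'$ stays in $\kcirc[t_i^{\pm1}]$, so one instead observes that after the localization $Z_0$ becomes an open subscheme of $\Spec(\kcirc[s_0\..s_{m'},u_1\..u_{l-m'}]/(s_0\dots s_{m'}-\pi))$ via $t_i\mapsto s_i$ for $i\le m'$ and the remaining $t$'s expressed through units).

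Concretely, I would argue as follows. Step one: localize $Z_0$ at $g(x)$ by inverting all coordinate functions among $t_0\..t_l$ that do not vanish at $g(x)$, and correspondingly shrink $U_0$ to a neighborhood $U$ of $x$ so that $g$ still factors through the localization. Step two: show by a direct coordinate manipulation that this localized scheme $Z_0'$ admits an \'etale (even an open immersion composed with a product-with-affine-space) map to a model semistable scheme $Z$ of the form in \eqref{eq} with the new $m$ equal to $m'$, in such a way that the image of $g(x)$ is the origin $O$ of $Z$. The mechanism: among $t_0\..t_m$, exactly the first $m'+1$ (say) vanish at $g(x)$; on the locus where $t_{m'+1}\..t_m$ are invertible the relation $t_0\dots t_m=\pi$ reads $t_0\dots t_{m'}=\pi\cdot(t_{m'+1}\dots t_m)^{-1}$, and we can introduce a new variable equal to $t_0$ times that unit to put it in the shape $s_0 s_1\dots s_{m'}=\pi$; the invertible $t$'s and the non-vanishing free $t$'s become units, hence $\bfG_m$-factors, which we may forget after an \'etale (indeed open immersion into $\bfA^1$) enlargement, while the vanishing free variables among $t_{m+1}\..t_l$ become new free coordinates of $Z$. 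Step three: compose $U\to Z_0'\to Z$ to get the desired \'etale $f\:U\to Z$ with $f(x)=O$.

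The main obstacle — and the reason the lemma is ``hard to find in the literature'' — is precisely the careful bookkeeping in step two: verifying that after inverting the nonzero coordinates one really lands in (an \'etale neighborhood of) a \emph{model} semistable scheme of the standard form \eqref{eq}, with $\pi$ still a nonzero element of $\kcirc$ and with the origin hit, rather than in some more general monomial scheme. This is where the hypothesis that $k$ is algebraically closed enters most cleanly: it guarantees that the residue field $\tilk$ is algebraically closed, so the closed point $x\in X_s$ is rational and the \'etale chart $g$ can be arranged with $g(x)$ a $\tilk$-point whose coordinates $t_i(g(x))$ are honest elements of $\tilk$, hence we may subtract them off (translate) to assume each $t_i$ with $i>m'$ that is ``nonzero at $g(x)$'' is in fact a \emph{unit} in a Zariski neighborhood, and each that is ``zero at $g(x)$'' vanishes at $g(x)$ and can serve as a genuine coordinate. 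Once this normalization is in place the remaining verification is the routine commutative-algebra identity rewriting $t_0\dots t_m=\pi$ on the relevant localization, together with the standard fact that \'etaleness is preserved under the composition $U\to Z_0'\hookrightarrow Z$ and under forgetting $\bfG_m$-factors (which map \'etale-ly, even as open immersions, into $\bfA^1$). I would then remark that the same argument, without translating, handles a closed point that is not necessarily the origin, but since the statement only asks for the origin we stop here.
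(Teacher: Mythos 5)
Your proposal is correct and follows essentially the same route as the paper's proof: use that $k(x)=\tilk$ (algebraic closedness) to translate away the values of the coordinates that do not vanish at the image of $x$, absorb the non-vanishing chain variables $t_i$ ($i\le m$) into the product relation so that it becomes $s_0\dots s_{m'}=\pi$ with the same $\pi\in\kcirc$, and verify \'etaleness by a direct coordinate computation. The only (cosmetic) difference is bookkeeping: the paper multiplies the non-vanishing chain variables into one vanishing variable to get a single \'etale morphism $Z\to Z'$ whose Jacobian determinant is a unit at the point, whereas you first localize and then exhibit an open immersion of the localization into the new model scheme.
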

\begin{proof}
Since $X_s$ is a variety over the algebraically closed field $\tilk$, a point $x\in X_s$ is closed if and only if $k(x)=\tilk$. Since $X$ locally admits an \'etale morphism $f$ to a model scheme $Z$ as in (\ref{eq}), it suffices to prove the claim for $Z$ and the closed point $z=f(x)\in Z_s$. Let $n\ge 1$ be the number of zeros among $t_0(z)\..t_m(z)$. Renumbering $t_0\..t_m$ we can assume that $t_i(z)=0$ for $0\le i\le n-1$. Set $t'_i=t_i$ for $0\le i\le n-2$ and $t'_{n-1}=t_{n-1}\dots t_m$. For any $n\le i\le m$ lift $t_i(z)\in k(z)=\tilk$ to an element $c_i\in\kcirc$ and set $t'_i=t_i-c_i$. Then we obtain a morphism $g\:Z\to\Spec(\kcirc[t'_0\..t'_m]/(t'_0\dots t'_{n-1}-\pi))$ that takes $z$ to the origin, and one can easily check that $g$ is \'etale at $z$.
\end{proof}

\subsubsection{Generic units of $\calO_{Z,O}$}
Let $A=\kcirc[t_0\..t_l]/(t_0\dots t_m-\pi)$ and $R=A_q$ the localization at the prime ideal $q=(\kcirccirc,t_0\..t_l)$. So, $Z=\Spec(A)$ is a model semistable $S$-scheme and $R=\calO_{Z,O}$. Our next aim is to describe $R\cap(R_\eta)^\times$, where $R_\eta=R\otimes_{\kcirc}k$, and for this we will use certain monomial semivaluations on $R$. In fact, they are related to skeletons of Berkovich spaces, but we will give a simple ad hoc definition.

Let $\Delta$ be the set of tuples $r=(r_0\..r_l)\in[0,1]^{l+1}$ such that $r_0\dots r_m=|\pi|$. For any $r\in\Delta$ the rule $|a_nt^n|_r=|a_n|r^n=|a_n|r_0^{n_0}\dots r_l^{n_l}$ defines a character on the set of monomials $a_nt^n$, where $n\in\bfN^{l+1}$ and $a_n\in \kcirc$. Next, let us naturally extend $|\ |_r$ to a semivaluation on $R$. Note that replacing $t_0$ by $\pi t_1^{-1}\dots t_m^{-1}$ one can uniquely represent any $a\in A$ as a Laurent polynomial $\sum_{n\in\bfZ^m\times\bfN^{l-m}} a_nt^n$ such that $|a_n|\le|\pi|^{-\min(0,n_1\..n_m)}$. We call this the {\em special representation} of $a$ and the elements $a_nt^n$ as above will be called {\em special monomials}. Using the special representation of $a$ the formula $|a|_r=\max_{n\in\bfZ^m\times\bfN^{l-m}}|a_n|r^n$ defines a (multiplicative) semivaluation $|\ |_r\:A\to\bfR_{\ge 0}$. Furthermore, being generated by $t_i$ with $r_i=0$, the kernel of $|\ |_r$ is contained in $q$, and hence $|\ |_r$ extends by multiplicativity to a semivaluation $|\ |_r\:R\to\bfR_{\ge 0}$. Thus, we can view $\Delta$ as a set of $\kcirc$-semivaluations on $R$ and then any $b\in R$ defines a function $|b|_\Delta\:\Delta\to\bfR_{\ge 0}$ by sending $r$ to $|b|_r$.

\begin{lem}\label{newlem}
Assume that $a=a_nt^n$ and $a'=a'_{n'}t^{n'}$ are two special monomials. Then the following conditions are equivalent:

(i) $a'\in aA$,

(ii) $|a_n|\ge |a'_{n'}|$, $|a_n\pi^{n_i}|\ge|a'_{n'}\pi^{n'_i}|$ for $1\le i\le m$, and $n_i\le n'_i$ for $i>m$.

(iii) $a$ {\em dominates} $a'$ in the sense that $|a|_\Delta\ge|a'|_\Delta$ (i.e. $|a|_r\ge |a'|_r$ for any $r\in\Delta$).
\end{lem}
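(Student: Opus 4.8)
The plan is to prove the chain of implications $(i) \Rightarrow (iii) \Rightarrow (ii) \Rightarrow (i)$, or possibly a cyclic arrangement, keeping in mind that the nontrivial content is the combinatorial bookkeeping of the special representation. The implication $(i)\Rightarrow(iii)$ is essentially free: if $a' = ab$ for some $b\in A$, then for every $r\in\Delta$ we have $|a'|_r = |ab|_r \le |a|_r|b|_r \le |a|_r$ because $|b|_r\le 1$ (all coefficients of $b$ in its special representation lie in $\kcirc$ and all $r_i\le 1$, so every special monomial of $b$ has $|\,\cdot\,|_r\le 1$ — here one must note that the constraint $|b_n|\le |\pi|^{-\min(0,n_1,\dots,n_m)}$ together with $r_0\cdots r_m=|\pi|$ forces $|b_n|r^n\le 1$). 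So the first step is just this estimate, done carefully for a single special monomial and then extended by the ultrametric inequality.

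The harder direction is $(iii)\Rightarrow(ii)$, and then $(ii)\Rightarrow(i)$ should be a direct construction. For $(iii)\Rightarrow(ii)$ I would evaluate the inequality $|a|_r\ge|a'|_r$ at cleverly chosen points $r\in\Delta$. The exponent $n_i\le n'_i$ for $i>m$ comes from letting $r_i\to 0$ for a fixed $i>m$ while keeping the other coordinates in a fixed admissible configuration: the ratio $|a'|_r/|a|_r$ behaves like $r_i^{n'_i - n_i}$ up to bounded factors, so boundedness as $r_i\to 0$ forces $n'_i\ge n_i$. For the coefficient inequalities, note that $\Delta$ is the set of $r$ with $r_0\cdots r_m=|\pi|$ and each $r_i\in[0,1]$; on the "interior" torus ($r_i>0$ for all $i\le m$, $r_i=1$ for $i>m$, say) the function $r\mapsto r^{n-n'}$ restricted to the hyperplane $\sum_{j\le m}\log r_j = \log|\pi|$ is an exponential in the free parameters, and $|a_n|r^n\ge|a'_{n'}|r^{n'}$ for all such $r$ forces the exponents to agree in the directions along the hyperplane and gives the coefficient comparison in the remaining (constrained) direction. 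Concretely: parametrize $\log r_i = x_i$ for $i\le m$ with $\sum x_i=\log|\pi|$, set remaining coordinates to $1$; then $\log|a_n| + \sum n_i x_i \ge \log|a'_{n'}| + \sum n'_i x_i$ for all such $x$, which by a linear-algebra argument (a linear functional bounded below on an affine subspace is constant on the subspace) forces $n_i - n'_i$ to be independent of $i$ for $1\le i\le m$, i.e. $n_i - n'_i = c$ for a common integer $c$, and then the inequality becomes $\log|a_n| + c\log|\pi|\cdot(\text{weight}) \ge \log|a'_{n'}|$ at the balance point. Unwinding, $n_i = n'_i + c$ for $1\le i\le m$ and $|a_n\pi^{c}|^{?}$ — one then checks that condition (ii) as stated (the inequalities $|a_n|\ge|a'_{n'}|$ and $|a_n\pi^{n_i}|\ge|a'_{n'}\pi^{n'_i}|$) is exactly the translation of this, using that $\pi^{n_i}/\pi^{n'_i} = \pi^c$ is the same for all $i\le m$.

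Finally, $(ii)\Rightarrow(i)$: given (ii), I would write $a'_{n'}t^{n'} = a\cdot b$ explicitly with $b = (a'_{n'}/a_n)\, t^{n'-n}$, and check $b\in A$. The exponents $n'_i - n_i$ for $i>m$ are $\ge 0$ so those factors are genuine polynomials; for $i\le m$ the monomial $t^{n'-n}$ may have negative exponents, but using the relation $t_0\cdots t_m = \pi$ one clears denominators, and the hypotheses $|a_n|\ge|a'_{n'}|$ and $|a_n\pi^{n_i}|\ge|a'_{n'}\pi^{n'_i}|$ are precisely what guarantees that the resulting coefficient $a'_{n'}/a_n$ times the appropriate power of $\pi$ lands in $\kcirc$ (i.e. that $b$ has a special representation with integral coefficients).

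The main obstacle I anticipate is the middle implication $(iii)\Rightarrow(ii)$: one must extract \emph{discrete} conclusions (the integer equalities $n_i - n'_i$ being constant over $i\le m$, and the integer inequalities $n_i\le n'_i$ for $i>m$) from inequalities of \emph{real-analytic} functions on $\Delta$, and one has to be careful that $\Delta$ is large enough — in particular that its interior contains a full-dimensional piece of the hyperplane $r_0\cdots r_m = |\pi|$ — for the "linear functional bounded below on a subspace is constant" argument to bite. Degenerate cases ($m=0$, or $|\pi|=1$ which is excluded, or $\pi$ not a unit so some $r_i$ can be very small) should be dispatched with a remark that the argument is uniform. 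Once these exponent relations are pinned down, (ii) is just their honest restatement and (ii)$\Rightarrow$(i) is bookkeeping.
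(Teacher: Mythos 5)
Your steps (i)$\Rightarrow$(iii) and (ii)$\Rightarrow$(i) are sound and essentially match the paper's bookkeeping: for the first, the key point $|b|_r\le 1$ for every $b\in A$ and $r\in\Delta$ is exactly right (and your verification of it via $|b_u|\le|\pi|^{-\min(0,u_1,\dots,u_m)}$ and $r_1\cdots r_m\ge|\pi|$ works), and the degeneration $r_i\to 0$ for $i>m$ correctly extracts $n_i\le n'_i$.

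The gap is in (iii)$\Rightarrow$(ii), where you invoke ``a linear functional bounded below on an affine subspace is constant'' on the hyperplane $\sum_{j\le m}\log r_j=\log|\pi|$. The portion of that hyperplane lying in $\log\Delta$ is cut out by the further constraints $\log r_j\le 0$, so each $\log r_j$ ranges over $[\log|\pi|,0]$ and the region is a \emph{compact} simplex, not an affine subspace; every linear functional is bounded there, so nothing about its linear part follows, and full-dimensionality (the caveat you raise) does not help --- unboundedness is what that lemma needs. Worse, the conclusion you aim for, that $n_i-n'_i$ is independent of $i$ for $1\le i\le m$, is simply false: take $m=l=2$, $a=t_1$, $a'=\pi t_2$, so $n=(1,0)$, $n'=(0,1)$, $a_n=1$, $a'_{n'}=\pi$. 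On $\Delta$ one has $|\pi|r_2=r_0r_1r_2\cdot r_2\le r_1$, so (iii) holds; moreover $a'/a=\pi t_2/t_1=t_0t_2^2\in A$, so (i) and (ii) hold as well; yet $n_1-n'_1=1\ne -1=n_2-n'_2$. Thus your intermediate claim would refute a true instance of the lemma. The correct passage from (iii) to (ii) --- which is what the paper does --- is the statement that a linear inequality holds on a compact simplex iff it holds at the vertices: after setting $r_i=1$ for $i>m$, the vertices of $\Delta_m$ are the points $\rho_j$ with $j$-th coordinate equal to $|\pi|$ and the rest equal to $1$, and evaluating $|a_n|r^n\ge|a'_{n'}|r^{n'}$ at $\rho_0$ gives $|a_n|\ge|a'_{n'}|$ while $\rho_j$ for $1\le j\le m$ gives $|a_n\pi^{n_j}|\ge|a'_{n'}\pi^{n'_j}|$, i.e.\ condition (ii) verbatim, with no exponent identities involved.
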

\begin{proof}
(i)$\Longleftrightarrow$(ii) For any $b\in A$ the special representations of $b$ and $ab$ contain the same number of non-zero special monomials, in particular, $b$ is a special monomial if and only if $ab$ is. Thus, $a'\in aA$ if and only if there exists a special monomial $b=b_ut^u$ such that $a'=ab$. Clearly, this happens if and only if $n_i\le n'_i$ for $i>m$ and the monomial $b=(a'_{n'}/a_n)t^{n'-n}$ is special, that is, $$|a'_{n'}/a_n|\le|\pi|^{\max(0,n_1-n'_1\..n_m-n'_m)}.$$ The latter means that $|a'_{n'}/a_n|\le 1$ and $|a'_{n'}/a_n|\le\pi^{n_i-n'_i}$ for $1\le i\le m$.

(ii)$\Longleftrightarrow$(iii) In (iii), one compares the power functions $|a|_\Delta=|a_n|r^n$ and $|a'|_\Delta=|a'_{n'}|r^{n'}$ on $\Delta$. Note that $\Delta=\Delta_m\times[0,1]^{l-m}$, where $\Delta_m\subset[0,1]^{m+1}$ is an (exponential) $m$-dimensional simplex with vertices $v_0\..v_m$ of the form $(1\.. 1,|\pi|,1\..1)$. For $0\le j\le m$ set $\rho_j=(v_j,1\dots 1)\in\Delta$, then it is easy to see that $|a_n|r^n\ge |a'_{n'}|r^{n'}$ on $\Delta$ if and only if $n_i\le n'_i$ for $i>m$ and $|a_n|\rho_j^n\ge|a'_{n'}|\rho_j^{n'}$ for $0\le j\le m$. For $j=0$ this gives $|a_n|\ge|a'_{n'}|$ and for $1\le j\le m$ this gives $|a_n\pi^{n_j}|\ge|a'_{n'}\pi^{n'_j}|$.
\end{proof}

If $a=\sum_{n\in\bfZ^m\times\bfN^{l-m}} a_nt^n$ and $a_dt^d$ dominates all special monomials in this representation then we call $a_dt^d$ the {\em dominating monomial}.

\begin{lem}\label{locallem}
Keep the above notation, and let $a=\sum_{n\in\bfZ^m\times\bfN^{l-m}} a_nt^n$ be the special representation of a non-zero element of $A$.

(i) The following conditions are equivalent: (a) $a\in R^\times$, (b) $|a_0|=1$, (c) $|a|_\Delta$ is the constant function 1.

(ii) The following conditions are equivalent: (d) $a\in R^\times_\eta$, (e) there exists a dominating monomial $a_dt^d$ and $d\in\bfZ^m\times\{0\}\subset\bfZ^m\times\bfN^{l-m}$, (f) $a=ua_dt^d$, where $u\in A\cap R^\times$ and $d\in\bfZ^m\times\{0\}$.
\end{lem}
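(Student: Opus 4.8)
The plan is to prove (i) first, since its argument is short and (ii) will build on it. For (i), the implication (b)$\Rightarrow$(c) is immediate: if $|a_0|=1$ then the special monomial $a_0$ dominates every special monomial in the representation of $a$ (because $a\in A$ means all exponents $n_i\ge 0$ for $i\le m$ and $n_i\ge 0$ for $i>m$, and $|a_n|\le 1$; apply Lemma~\ref{newlem}(ii)), so by Lemma~\ref{newlem}(iii) we get $|a|_r=|a_0|_r=1$ for all $r\in\Delta$. Conversely (c)$\Rightarrow$(b) by evaluating at the vertex $\rho_0=(1\..1)\in\Delta$ where $|a|_{\rho_0}=\max_n|a_n|$, forcing $|a_0|=1$ and $|a_n|<1$ for $n\neq 0$ (or $|a_n|=1$ impossible by that maximum being attained uniquely—actually one just needs $|a_0|=1$). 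For (b)$\Rightarrow$(a): write $a=a_0(1+b)$ with $b\in q$ (the maximal ideal of $R$), so $a$ is a unit in $R$; here $a_0\in\kcirc$ with $|a_0|=1$ means $a_0\in(\kcirc)^\times$ since $k$ need not be—wait, $a_0$ is a unit in $\kcirc$ precisely when $|a_0|=1$, which holds. For (a)$\Rightarrow$(b): if $|a_0|<1$ then $a\in q$, the maximal ideal, so $a\notin R^\times$.

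For (ii), the implications (f)$\Rightarrow$(e)$\Rightarrow$(d) are the easy direction: (f)$\Rightarrow$(e) is immediate since multiplying by a unit $u\in A\cap R^\times$ does not change which monomial dominates (by part (i), $|u|_\Delta\equiv1$, so $a$ and $a_dt^d$ have proportional—hence equal up to the constant—valuation functions, and $u a_d t^d$ has $a_d t^d$ as dominating monomial with the same exponent $d$); and (e)$\Rightarrow$(d) because $a_dt^d$ with $d\in\bfZ^m\times\{0\}$ is invertible in $R_\eta$: indeed $t_i$ is invertible in $R_\eta$ for $i\le m$ (from the relation $t_0\dots t_m=\pi$ with $\pi\neq0$ invertible in $k$), and $a_d\in\kcirc\subset k^\times$ is invertible in $R_\eta$; moreover $a/(a_dt^d)=1+(\text{lower terms})$ is a unit in $R_\eta$ by the same argument as in (i) applied after clearing $a_dt^d$ (the lower terms lie in $qR_\eta$... one should check this carefully). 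The real content is (d)$\Rightarrow$(e): here I would argue that if $a$ is a unit in $R_\eta$ then so is its inverse $a^{-1}$, and after writing $a^{-1}=\sum a'_{n'}t^{n'}$ in special form (now with exponents in $\bfZ^m\times\bfN^{l-m}$ since we are in $A$, but note $a^{-1}$ need not lie in $A$—it lies in $R_\eta$, so one must first clear denominators: multiply $a$ by a suitable monomial $t^e$ with $e\in\bfN^m\times\{0\}$ to land in $A$, reducing to the case $a\in A$), the relation $a\cdot a^{-1}=1$ forces, via Lemma~\ref{newlem} and the ultrametric inequality on each $|\ |_r$, that $|a|_r\cdot|a^{-1}|_r=1$ so $|a|_\Delta$ is everywhere nonzero, whence (comparing with the multiplicativity $|a|_r=\max_n|a_n|r^n$) the maximum must be attained by a single monomial $a_dt^d$ on all of $\Delta$; that monomial then dominates all the others by Lemma~\ref{newlem}(iii). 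Finally, since $|a|_r\neq0$ for all $r\in\Delta$ including those with some $r_i=0$ for $i>m$, the dominating exponent must satisfy $d_i=0$ for $i>m$, i.e.\ $d\in\bfZ^m\times\{0\}$. For (e)$\Rightarrow$(f), set $u=a\cdot(a_dt^d)^{-1}$; one computes that $u\in A$ (each special monomial $a_nt^n$ divided by $a_dt^d$ is again special by Lemma~\ref{newlem}(i)$\Leftrightarrow$(iii), since $a_dt^d$ dominates it) and $|u|_\Delta\equiv1$, so $u\in A\cap R^\times$ by part (i).

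The main obstacle I anticipate is the bookkeeping in (d)$\Rightarrow$(e): precisely showing that a single monomial dominates, uniformly over the whole polytope $\Delta$, rather than having the maximum switch between monomials on different faces. The clean way to handle this is to exploit that $\Delta$ (equivalently $\Delta_m$) is a polytope and $\log|a|_r$ is piecewise-linear and concave in $\log r$ as a max of linear functions; on a product of simplices with the vertex structure described in the proof of Lemma~\ref{newlem}, nonvanishing of $|a|_\Delta$ together with the requirement that $|a|_r\cdot|a^{-1}|_r=1$ pins down $|a|_\Delta$ to be log-affine, hence a single monomial. Care is needed near the boundary faces $r_i=0$, and in the preliminary step of clearing denominators to reduce from $R_\eta$ to $A$ without losing track of which variables among $t_1\..t_m$ got inverted.
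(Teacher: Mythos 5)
Your proposal is correct in outline and follows essentially the same route as the paper: units of $R$ are detected by the free coefficient, and membership in $R_\eta^\times$ is detected by combining multiplicativity of $|\ |_r$ with the convexity observation that if a product of two maxima of power functions is a nonzero constant on $\Delta$ then each factor is a single power function, nonvanishing on the faces $r_i=0$ ($i>m$) then forcing $d\in\bfZ^m\times\{0\}$. The one step that needs repair is your reduction of $a^{-1}$ to $A$: multiplying by a monomial $t^e$ with $e\neq 0$ cannot clear the denominators of $R_\eta=A_q\otimes_{\kcirc}k$, since such monomials lie in $q$ and hence are not the elements one inverts in forming $R=A_q$; instead use that $a\in R_\eta^\times$ gives $\pi^n\in aR$, hence $aa'=u\pi^n$ with $a'\in A$ and $u\in A\cap R^\times$, and run your convexity argument on $|a|_\Delta\,|a'|_\Delta=|\pi|^n$ --- which is exactly what the paper does. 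Two smaller slips: a maximum of linear functions is convex, not concave; and in (c)$\Rightarrow$(b) evaluating at the vertex only yields $\max_n|a_n|=1$, so one still needs the observation that every special monomial $a_nt^n$ with $n\neq 0$ has non-constant $|a_nt^n|_\Delta$ (equivalently, lies in $q$) in order to isolate $|a_0|$, and note that special monomials may well have negative exponents in the coordinates $t_1\..t_m$.
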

\begin{proof}

(i) Note first that $|a|_\Delta$ is the maximum of the power functions $|a_nt^n|_\Delta=|a_n|r^n$. Since $|a_n|\le 1$ for any $n$, it follows that (b)$\Longleftrightarrow$(c).

Let us show that (a)$\Longleftrightarrow$(b). Since $a\in R^\times$ if and only if $a\notin q$, it suffices to prove that any special monomial $a_nt^n$ with $n\neq 0$ lies in $q$. The latter is easily checked by use of Lemma~\ref{newlem}: if $n_i>\min(0,n_1\..n_m)$ for $1\le i\le m$ then $t_i|a_nt_n$, otherwise $0>n_1=\dots=n_m$ and then $a_nt^n$ is divisible by $t_0=\pi t_1^{-1}\dots t_m^{-1}$.

(ii) Recall that $|a|_\Delta$ is the maximum of power functions on $\Delta$. The crucial observation is that a product of two functions of this form is a power function on $\Delta$ if and only if both of them are power functions on $\Delta$. Perhaps the easiest way to show this is by linearizing the question via logarithms: $\log(|a|_\Delta)$ is the maximum of linear functions of $\log(r_i)$, and by convexity considerations if a sum of two such functions is linear then both summands are linear.

Thus, if $a,a'\in A$ are such that $|a|_\Delta|a'|_\Delta$ is a non-zero constant on $\Delta$, then $|a|_\Delta$ is a power function on $\Delta$. Clearly, $|a|_\Delta$ does not attain zero in this case, and hence $|a|_\Delta=cr^d$ for some $d\in\bfZ^m\times\{0\}$. It follows that $|a|_\Delta=|a_dt^d|_\Delta$ and $a_dt^d$ is the dominating monomial.

(d)$\implies$(e) If $a\in R^\times_\eta$ then $\pi^n\in aR$ for some $n$, and hence there exists $a'\in A$ and $u\in A\cap R^\times$ such that $aa'=u\pi^n$. By (i) $|u|_\Delta$ is the constant function 1, hence $|a|_\Delta|a'|_\Delta$ is the constant function $|\pi^n|$, and by the above paragraph, there exists a dominating monomial $a_dt^d$ with $d\in\bfZ^m\times\{0\}$.

(e)$\implies$(f) If (e) holds then by Lemma~\ref{newlem} any monomial of $a$ is divisible by $a_dt^d$, and hence $u=a/(a_dt^d)\in A$. Since $1$ is the free monomial of $u$, the latter is a unit in $R$ by (i).

(f)$\implies$(d) The elements $a_d,t_1\..t_m$ divide some $\pi^n$ and hence $ua_dt^d\in R^\times_\eta$.
\end{proof}

\begin{cor}\label{unitcor}
Let $R$ be as above. An element of $R$ lies in $(R_\eta)^\times$ if and only if it is of the form $a=u\pi'\prod_{i=0}^mt_i^{n_i}$, where $0\neq\pi'\in\kcirc$, $u\in R^\times$ and $n_i\in\bfN$.
\end{cor}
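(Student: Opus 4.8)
The plan is to read the statement off from Lemma~\ref{locallem}(ii). First I would reduce to the case $a\in A$: every element of $R=A_q$ is a unit of $R$ times an element of $A$, and neither the condition $a\in(R_\eta)^\times$ nor the asserted shape of $a$ is affected if one multiplies $a$ by a unit of $R$ (such a unit can be absorbed into $u$). In this form the backward implication is immediate: if $a=u\pi'\prod_{i=0}^m t_i^{n_i}$ with $0\neq\pi'\in\kcirc$, $u\in R^\times$ and $n_i\in\bfN$, then in the $k$-algebra $R_\eta$ the scalar $\pi'$ is invertible, $u$ remains a unit, and each $t_i$ with $0\le i\le m$ is invertible because $t_0\cdots t_m=\pi\in k^\times$; hence $a\in(R_\eta)^\times$.

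For the forward implication I would take $a\in A$ with $a\in(R_\eta)^\times$, so in particular $a\neq0$, and apply Lemma~\ref{locallem}(ii), condition (f): $a=ua_dt^d$ with $u\in A\cap R^\times$ and $d\in\bfZ^m\times\{0\}$, that is $a_dt^d=a_d\,t_1^{d_1}\cdots t_m^{d_m}$, where by the inequality built into the special representation $|a_d|\le|\pi|^{-\min(0,d_1\..d_m)}$. Put $e=-\min(0,d_1\..d_m)\ge0$; then $e$ and all the $d_i+e$ are nonnegative, and using $(t_0\cdots t_m)^e=\pi^e$ I rewrite
\[
a=u\,(a_d\pi^{-e})\,t_0^{e}\prod_{i=1}^m t_i^{d_i+e}.
\]
Setting $\pi'=a_d\pi^{-e}$ gives $|\pi'|=|a_d|\,|\pi|^{-e}\le1$ by the above inequality, so $\pi'\in\kcirc$, and $\pi'\neq0$ since $a_d\neq0$; this exhibits $a$ in the required form with $n_0=e$ and $n_i=d_i+e$, the right-hand side lying manifestly in $A$.

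The point that must not be glossed over is the use of the bound $|a_d|\le|\pi|^{-\min(0,d_1\..d_m)}$ from the definition of the special representation: clearing the negative powers of $t_1\..t_m$ costs a factor $\pi^{-e}$, and this bound is exactly what keeps the resulting scalar $\pi'$ inside $\kcirc$ rather than merely in $k^\times$. Everything else is bookkeeping with the relation $t_0\cdots t_m=\pi$.
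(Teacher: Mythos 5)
Your proof is correct and follows the same route as the paper: reduce to $a\in A$ by absorbing a unit of the local ring $R=A_q$, then invoke the equivalence (d)$\Longleftrightarrow$(f) of Lemma~\ref{locallem}(ii). The only difference is that you spell out the bookkeeping the paper leaves implicit, namely converting the form $ua_dt^d$ with $d\in\bfZ^m\times\{0\}$ into nonnegative exponents of $t_0\..t_m$ at the cost of the factor $\pi^{-e}$, and correctly observing that the bound $|a_d|\le|\pi|^{-\min(0,d_1\..d_m)}$ from the special representation is exactly what keeps $\pi'=a_d\pi^{-e}$ in $\kcirc$.
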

\begin{proof}
The inverse implication is clear. Conversely, if $a\in R\cap(R_\eta)^\times$ then multiplying it by an appropriate $u\in R^\times$ we can assume that $a\in A$ and it remains to use the equivalence of (d) and (f) in Lemma~\ref{locallem}(ii).
\end{proof}

\subsection{Towers of length two}
Now, our goal is to prove that $\eta$-smooth admissible $S$-schemes possess semistable $\tau$-coverings. We start with some particular cases that will be used in the proof. First, let us work out the toric case.

\begin{lem}\label{composlem}
Assume that $k$ is algebraically closed and let $$B=\kcirc[t_0\..t_l]/(t_0\dots t_m-\pi_0),\ A=B[x_0,x_1]/(x_0x_1-\pi_1t_0^{n_0}\dots t_m^{n_m}),$$ where $\pi_0$ and $\pi_1$ are non-zero elements of $\kcirc$. Then $X=\Spec(A)$ possesses a $\tau$-covering $X'$ which is strictly semistable over $S$.
\end{lem}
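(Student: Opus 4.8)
The plan is to reduce $X=\Spec(A)$ to a model semistable scheme by an explicit combination of an admissible blow-up and a flat surjective $\eta$-\'etale covering, using the combinatorial structure of the equation $x_0x_1=\pi_1t_0^{n_0}\cdots t_m^{n_m}$. The issue is that the right-hand side is a monomial in the $t_i$ whose exponents $n_0\..n_m$ need not be $0$ or $1$, and the ``constant'' $\pi_1t_0^{n_0}\cdots t_m^{n_m}$ is only a unit on the generic fiber, not a scalar; so $A$ is not literally of the model form~\eqref{eq}. First I would treat the toric/monomial core: by Corollary~\ref{unitcor} applied in $B$ (localized at its origin), the element $\pi_1t_0^{n_0}\cdots t_m^{n_m}$ is, up to a unit, exactly the shape of a generic unit, so the geometry of $X\to\Spec(B)$ near the origin is that of a standard node degenerating along the divisor $\{t_0\cdots t_m=0\}$ with multiplicities. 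The combinatorial content is that the toric variety attached to the fan of this configuration can be subdivided into smooth cones with the node equation becoming $x_0x_1-(\text{monomial of the subdivided coordinates})$ with all exponents $0$ or $1$.

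Concretely, I would proceed in three steps. \textbf{Step 1 (normalize the node).} Introduce the product decomposition: since $k$ is algebraically closed, over $\Spec(B)$ away from the bad locus $X\to\Spec(B)$ is smooth, and the node degenerates precisely along the toric boundary; replace $X$ by the toric modification obtained by subdividing so that each $n_i$ becomes effectively $1$. Technically one does this by a sequence of admissible blow-ups of $\Spec(B)$ along ideals generated by $\pi_0$ and subsets of the $t_i$, and on each chart the pulled-back equation acquires the form $x_0x_1-\pi't_{i_0}\cdots t_{i_r}$ with distinct indices; this is an $\eta$-modification hence a $\tau$-covering by the valuative criterion, as noted before Lemma~\ref{coverlem}. \textbf{Step 2 (extract roots of $\pi_1$).} The scalar $\pi'\in\kcirc$ still obstructs semistability unless $\pi'$ fits into the product of the $t_i$'s; here I would invoke that we only need a $\tau$-covering, not a morphism over a fixed base, and pass to a further flat surjective $\eta$-\'etale cover that adjoins a coordinate $t_{l+1}$ with $t_{l+1}\cdot(\text{unit})=\pi'$, absorbing $\pi'$ into the monomial. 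Alternatively, and more cleanly, one normalizes so that $\pi'$ and $\pi_0$ and $\pi_1$ generate the same ``valuation data'', reducing to the case all these scalars are literally powers of a single $\pi$. \textbf{Step 3 (identify the model).} After Steps 1--2, on each affine chart $X'$ we have an \'etale (in fact étale-locally isomorphic, or even an isomorphism after the explicit change of variables) morphism to $\Spec(\kcirc[s_0\..s_{l'}]/(s_0\cdots s_{m'}-\pi))$, which is exactly~\eqref{eq}; hence $X'$ is strictly semistable over $S$. Finally, check that the composite $X'\to X$ is a $\tau$-covering: it is a composition of admissible blow-ups (Step 1) and flat surjective $\eta$-\'etale morphisms (Step 2), each of which is a $\tau$-covering by the discussion preceding Lemma~\ref{coverlem}, and $\tau$-coverings are closed under composition.

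The main obstacle I expect is Step 2: cleanly arranging that the leftover scalar $\pi'$ (a genuine element of $\kcirc$, not a monomial) gets absorbed while staying within the category of \emph{admissible} $S$-schemes and producing an honest flat surjective $\eta$-\'etale map rather than something only generically \'etale. Adjoining an $n$-th root of $\pi'$ would require a finite extension of $k$, which is allowed in Theorem~\ref{berth} but \emph{not} inside Lemma~\ref{composlem} as stated (the ground field is fixed); so one must instead adjoin a \emph{new variable} $u$ with $u^N = \pi'\cdot(\text{unit in the already-present coordinates})$ or, better, recognize that after the blow-ups of Step 1 one of the boundary coordinates already ``is'' $\pi'$ up to a unit, by choosing the blow-up centers to also involve $\pi_1$. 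Making this bookkeeping precise — tracking which monomial in the final coordinates equals $\pi$ and verifying the resulting scheme is smooth over the model~\eqref{eq} via Lemma~\ref{loclem}-type computations — is where the real work lies; the rest is formal manipulation of $\tau$-coverings.
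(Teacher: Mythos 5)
Your proposal misses the central difficulty of this lemma, which is not the shape of the monomial $t_0^{n_0}\cdots t_m^{n_m}$ but the possible $\bfQ$-linear independence of $\log|\pi_0|$ and $\log|\pi_1|$. When these are incommensurable, $X$ admits \emph{no} strictly semistable modification at all --- this is exactly the product-of-annuli example in the introduction, whose skeleton $[\log|\pi_0|,0]\times[\log|\pi_1|,0]$ cannot be triangulated by simplices with rational slopes. Consequently your Step 1 (a sequence of admissible blow-ups) and Step 3 cannot possibly terminate in a model of the form~(\ref{eq}), and your Step 2 does not repair this: adjoining an $N$-th root of a scalar $\pi'$ is vacuous because $k$ is algebraically closed (the root already lies in $k$, so the cover splits), while a Kummer cover in the variables is not $\eta$-\'etale in residue characteristic dividing $N$ and in any case leaves the value-group data $\{\log|\pi_0|,\log|\pi_1|\}$ untouched. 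You correctly flag Step 2 as the weak point, but the fix you sketch (absorbing $\pi'$ into the toric boundary) addresses a different, and much milder, problem.

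The paper's proof uses an idea absent from your proposal: a $\tau$-covering by \emph{overlapping, non-proper} affine pieces. Fixing $r$ (e.g.\ $r=|p|$ in mixed characteristic), one chooses $\lam$ with $|\lam|\in r_1r^\bfQ\cap(r_1^{1/2},1)$ and covers $X$ by the two charts $X_i=\Spec(A[x_i/\lam])$; each is an $\eta$-isomorphism onto $X$, and the disjoint union is a $\tau$-covering because an admissible valuation with $\phi(x_i)\notin\phi(\lam)R$ for both $i$ would contradict $x_0x_1=\pi_1t_0^{n_0}\cdots t_m^{n_m}$ with $\pi_1\in\lam^2\kcirc$. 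This replaces $|\pi_1|$ by $|\pi_1/\lam|\in r^\bfQ$; an analogous $(m+1)$-chart cover of $\Spec(B)$ rationalizes $|\pi_0|$. Once both absolute values lie in $r^\bfQ$, the scheme descends to a discretely valued subfield $k_0\subset k$, and one invokes the KKMS combinatorial description and subdivision theorem to produce a strictly semistable modification after a finite extension $k_1/k_0$ (still inside $k$). Even this last, "rational" case is not a routine blow-up computation as your Step 1 suggests --- it requires the full toroidal subdivision theorem. Without the overlapping-cover mechanism (or an equivalent device exploiting that $\tau$-coverings need not be modifications), the lemma as stated is out of reach.
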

\begin{proof}
Consider the $(l+1)$-dimensional lattice $M=x_1^\bfZ\times t_1^\bfZ\times\dots\times t_l^\bfZ$. The embedding $A\into\kcirc[M]$ induces a torus embedding $\bfT=\Spec(\kcirc[M])\into X$ which naturally extends to an action of $\bfT$ on $X$, making $X$ a toric $\kcirc$-variety in the sense of \cite{Gubler-Soto}. Using \cite[Proposition~3.3]{Gubler-Soto} the problem can be now translated to a question in convex geometry about covering a polyhedral cone by simplicial ones subject to certain rationality conditions. However, we prefer to deduce the lemma from the classical semistable reduction in the discretely-valued case. If $k$ is of mixed characteristic $(0,p)$ set $r=|p|$, and choose any $r\in(0,1)\cap|k|$ otherwise.

Step 1. {\it The lemma holds when $r_0=|\pi_0|\in r^\bfQ$ and $r_1=|\pi_1|\in r^\bfQ$.} Find a discretely-valued subfield $k_0\subset k$ with an element $\pi\in k_0$ such that $r_0=|\pi|^{n_0}$ and $r_1=|\pi|^{n_1}$ for $n_0,n_1\in\bfN$. We can achieve that $\pi_i=\pi^{n_i}$ simply by multiplying $t_0$ and $x_0$ by units. Then $X$ is the base change of a toric variety $X_0$ over $k_0^\circ$. The main results of \cite{KKMS} imply that after a finite ground field extension, any toric $\kcirc$-variety possesses a strictly semistable modification: use the combinatorial description of toric $\kcirc$-varieties in \cite[IV.3.I]{KKMS} and the subdivision theorem \cite[III.4.1]{KKMS}. Thus, we can choose a subfield $k_0\subseteq k_1\subset k$ such that $k_1/k_0$ is finite and $X_1=X_0\otimes_{k^\circ_0}k^\circ_1$ possesses a modification $X'_1$ which is strictly semistable over $k^\circ_1$. Then $X=X_1\otimes_{k^\circ_1}\kcirc$, and pulling $X'_1\to X_1$ back to $\kcirc$ we obtain a modification $X'\to X$ which is strictly semistable over $S$.

Step 2. {\it The lemma holds when $r_0\in r^\bfQ$.} Choose $\lam\in\kcirc$ such that $$s=|\lam|\in r_1r^\bfQ\cap\left(r_1^{1/2},1\right).$$ For $i=0,1$ consider the $A$-subalgebra $A_i=A[x'_i]$ of $\Frac(A)$, where $x'_i=\frac{x_i}{\lam}$. Since $s>r_1$ we have that $$A_0=B\left[x'_0,x_1\right]/\left(x'_0x_1-\frac{\pi_1}{\lam}t_0^{n_0}\dots t_m^{n_m}\right),A_1=B\left[x_0,x'_1\right]/\left(x_0x'_1-\frac{\pi_1}{\lam}t_0^{n_0}\dots t_m^{n_m}\right).$$ By our choice, $|\frac{\pi_1}{\lam}|=\frac{r_1}{s}\in r^\bfQ$, hence applying Step 1 to $X_i=\Spec(A_i)\to\Spec(B)$ we obtain that $X_i$ possesses a strictly $S$-semistable $\tau$-covering $X'_i$.

We claim that $X_0\coprod X_1\to X$ is a $\tau$-covering and hence $X'_0\coprod X'_1$ is a $\tau$-covering of $X$, thereby proving the step. First, both morphisms $X_i\to X$ are $\eta$-isomorphisms because $x_i$ and $x'_i$ differ by a generic unit. As for the lifting property, we will even show that any admissible valuation $\nu\:\Spec(R)\to\Spec(A)$ lifts to some $\Spec(A_i)$. Indeed, $\nu$ is admissible, hence the homomorphism $\phi\:A\to R$ satisfies $\phi(\lam)\neq 0$ and therefore $\phi$ extends to (or factors through) $A_i$ if and only if $\phi(x_i)\in\phi(\lam)R$. If this is not so for $i=0,1$ then using that $R$ is a valuation ring we obtain that $\phi(x_0x_1)\notin\phi(\lam^2)R$. The latter is impossible since $x_0x_1=\pi_1t_0^{n_0}\dots t_m^{n_m}$ and $\pi_1\in\lam^2 \kcirc$ since $s^2>r_1$.

Step 3. {\it The general case.} This time we will change $Y=\Spec(B)$ in the same way as we have changed $X$ in the proof of step 2. So, choose
$\lam\in\kcirc$ such that $$s=|\lam|\in r_0r^\bfQ\cap\left(r_0^{1/(m+1)},1\right).$$ For each $0\le i\le m$ set $B_i=B[t'_i]$, where $t'_i=\frac{t_i}{\lam}$. Then $$B_i=\kcirc\left[t_0\..t_{i-1},t'_i,t_{i+1}\..t_l\right]/\left(t_0\dots t_{i-1}t'_it_{i+1}\dots t_m-\frac{\pi_0}{\lam}\right).$$ Precisely the same argument as in step 2 shows that each morphism $Y_i=\Spec(B_i)\to Y$ is an $\eta$-isomorphism and the morphism $\coprod_{i=0}^m Y_i\to Y$ is a $\tau$-covering (e.g., if an admissible valuation $\phi\:B\to R$ does not extends to any $B_i$ then $\phi(t_0\dots t_m)\notin\phi(\lam^{m+1})R$, which is impossible since $t_0\dots t_m=\pi_0$ and $\pi_0\in\lam^{m+1}\kcirc$).

Note that the schemes $X_i=Y_i\times_YX$ are of the form $X_i=\Spec(A_i)$ for
$$A_i=B_i\left[x,y\right]/\left(xy-(\pi_1\lam^{n_i})t_0^{n_0}\dots t_{i-1}^{n_{i-1}}t'^{n_i}_it_{i+1}^{n_{i+1}}\dots t_m^{n_m}\right).$$ Since $|\frac{\pi_0}{\lam}|=\frac{r_0}{s}\in r^\bfQ$, each $X_i$ has a strictly $S$-semistable $\tau$-covering $X'_i$ by step 2. Since $X_i$ are admissible, $X_i=(Y_i\times_YX)^\st$ and by compatibility of $\tau$-coverings with strict base changes $X_i$ form a $\tau$-covering of $X$. Hence $X'_i$ form a strictly $S$-semistable $\tau$-covering of $X$.
\end{proof}

The following corollary will play a central role in running induction on the dimension.

\begin{cor}\label{composcor}
Assume that $k$ is algebraically closed, $Y$ is a semistable $S$-scheme, and $f\:X\to Y$ is a nodal curve such that $f_\eta$ is smooth. Then $X$ possesses a $\tau$-covering $X'$ which is strictly semistable over $S$.
\end{cor}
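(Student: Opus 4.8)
The plan is to reduce Corollary~\ref{composcor} to Lemma~\ref{composlem} by working \'etale-locally on $X$ and on $Y$, using the explicit \'etale-local descriptions of semistable $S$-schemes (Lemma~\ref{loclem}) and of nodal curves (Section~\ref{nodalsec}) to put the morphism $f$ into the toric normal form appearing in the hypotheses of Lemma~\ref{composlem}. Since $\tau$-coverings can be glued over an \'etale (in particular open) cover of the base, it suffices to produce a semistable $\tau$-covering in a neighborhood of each closed point $x\in X_s$; because $k$ is algebraically closed these closed points are exactly the $\tilk$-points, and it is enough to treat them.

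First I would pick a closed point $x\in X_s$, set $y=f(x)\in Y_s$, and apply Lemma~\ref{loclem} to $Y$ at $y$: after replacing $Y$ by an \'etale neighborhood of $y$ we get an \'etale morphism $Y\to Z$ to a model semistable $S$-scheme $Z=\Spec(\kcirc[t_0\..t_l]/(t_0\dots t_m-\pi_0))$ sending $y$ to the origin $O$. Next I would analyze the nodal curve $f$ near $x$. By the \'etale-local description in Section~\ref{nodalsec}, after further \'etale localization on $X$ around $x$ we may assume $X\to Y$ is of the form $\Spec(\calO_Y[u,v]/(uv-h))\to Y$ (when $x$ is a node of its fiber) with $h\in\calO_Y$, or $f$ is smooth at $x$ of relative dimension one and hence \'etale-locally $X\cong\mathbb{A}^1_Y$ near $x$. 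The smooth case is trivial since $\mathbb{A}^1_Y$ over a (strictly) semistable $Y$ is again semistable. In the nodal case, the key point is to control the element $h$: since $f_\eta$ is smooth, $h$ does not vanish on $X_\eta$, i.e. $h\in\calO_{X,x}\cap(\calO_{X,x}[1/\pi_0])^\times$, and pulling this back through the \'etale map $Y\to Z$ (which identifies $\calO_{Z,O}$ with $R$ of Corollary~\ref{unitcor} after strict henselization/completion-free localization) Corollary~\ref{unitcor} forces $h=w\cdot\pi_1\cdot\prod_{i=0}^m t_i^{n_i}$ with $w$ a unit in the local ring, $0\neq\pi_1\in\kcirc$, and $n_i\in\bfN$. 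Absorbing $w$ into $u$ (or $v$) by a unit change of coordinates, $X$ becomes \'etale-locally at $x$ isomorphic to an open subscheme of
\[
\Spec\bigl(\kcirc[t_0\..t_l][x_0,x_1]/(t_0\dots t_m-\pi_0,\ x_0x_1-\pi_1 t_0^{n_0}\dots t_m^{n_m})\bigr),
\]
which is precisely the scheme treated in Lemma~\ref{composlem}.

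With this normal form in hand I would invoke Lemma~\ref{composlem} to get a strictly semistable $\tau$-covering of that model scheme; pulling it back along the \'etale neighborhood $U\to(\text{model scheme})$ gives a strictly semistable $\tau$-covering of $U$, since strict semistability is \'etale-local and $\tau$-coverings are stable under (strict) base change. Doing this for each closed point of $X_s$ produces an \'etale (hence $\tau$-) cover $\{U_j\to X\}$ together with strictly semistable $\tau$-coverings $U_j'\to U_j$; composing, the $U_j'$ form a strictly semistable $\tau$-covering of $X$, as desired. I expect the main technical obstacle to be the bookkeeping needed to pass the unit statement of Corollary~\ref{unitcor} from the local ring $\calO_{Z,O}=R$ to an \'etale (not merely local) neighborhood and to arrange $h$ to have the exact monomial shape $\pi_1\prod t_i^{n_i}$ rather than merely becoming such after inverting a unit; this requires noting that $R^\times$-factors can be cleared by an \'etale coordinate change on $X$ (replacing $u$ by $u/w$), and that the $t_i$ appearing are genuinely the pullbacks of the model coordinates, so that the two defining equations assemble into the single toric model of Lemma~\ref{composlem} up to an \'etale localization.
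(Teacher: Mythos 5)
Your overall strategy coincides with the paper's: \'etale-localize at a closed point $x\in X_s$, use Lemma~\ref{loclem} to place $y=f(x)$ at the origin of a model semistable scheme, use the \'etale-local structure of nodal curves to write $X$ near $x$ as \'etale over $\{uv=h\}$, invoke $\eta$-smoothness of $f$ together with Corollary~\ref{unitcor} to show $h$ is a unit times $\pi_1\prod_i t_i^{n_i}$, absorb the unit, and conclude by Lemma~\ref{composlem}. The reduction to closed points, the treatment of the smooth locus, and the final gluing/pullback of $\tau$-coverings are all fine (modulo the slip that the generic-unit condition is $h\in\calO_{Y,y}\cap(\calO_{Y,y}\otimes_{\kcirc}k)^\times$, not a condition in $\calO_{X,x}$).

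The one genuine gap is exactly the step you flag as the ``main technical obstacle'': Corollary~\ref{unitcor} is a statement about $R=\calO_{Z,O}$ for the \emph{model} scheme $Z$, whereas your $h$ lives in $\calO_{Y,y}$ for an \'etale neighborhood $Y\to Z$. No ``identification after strict henselization'' makes these rings equal, and as written the corollary simply does not apply to $h$; to follow your route you would have to prove an \'etale-local extension of Corollary~\ref{unitcor} (e.g.\ that every generic unit of $\calO_{Y,y}$ is, up to a unit of $\calO_{Y,y}$, the pullback of a generic unit of $R$ --- true, but requiring a divisorial argument that is not in the paper). The paper closes this hole with a one-line move that you miss: since $Y\to Z$ is \'etale, the composite $X\to Y\to Z$ is again a nodal curve, so one may simply replace $Y$ by $Z$ before introducing the presentation $\{vw=b\}$; then $b$ lies in the coordinate ring of the model scheme itself and Corollary~\ref{unitcor} applies verbatim to $b\in R=\calO_{Z,O}$. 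With that substitution your argument becomes the paper's proof.
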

\begin{proof}
The question is \'etale-local on $X$ hence we can work \'etale-locally at a closed point $x\in X$ and its image $y=f(x)$. In particular, we can assume that $Y$ is strictly semistable. In addition, if $f$ is smooth at $x$ then $X$ is semistable at $x$, hence we can assume that $x$ is a nodal point in the fiber over $y$, in particular, $x\in X_s$.

Step 1. {\em Reduction to the case when $Y=\Spec(B)$ is a model semistable $S$-scheme, $y=O$ is the origin, $f$ factors through an \'etale morphism $g\:X\to X_0=\Spec(A)$ with $A=B[v,w]/(vw-b)$, and $x_0=g(x)$ is the singular point of the fiber over $O$.} By Lemma~\ref{loclem} there exists an \'etale morphism $g\:U\to Z$ such that $U\subseteq Y$ is an open neighborhood of $y$, $Z=\Spec(B)$ for $B=\kcirc[t_0\..t_l]/(t_0\dots t_m-\pi_0)$ and $0\neq\pi_0\in\kcirc$, and $g(y)$ is the origin $O$ of $Z$. Since $X\to Z$ is a nodal curve too, replacing $X$ and $Y$ by $f^{-1}(U)$ and $Z$, we can assume that $Y=\Spec(B)$ and $y=O$. Then replacing $X$ by an \'etale neighborhood of $x$ we can also achieve that $X$ is \'etale over $X_0=\Spec(B[v,w]/(vw-b)$. Since $f$ is singular at $x$, the morphism $h\:X_0\to Y$ is singular at the image $x_0$ of $x$.

Step 2. {\em $b\in R_\eta^\times$, where $R=\calO_{Y,O}$.} Let $T\subset X_0$ be the singular locus of $h$; it is mapped isomorphically onto the vanishing locus $V(b)\subset Y$ of $b$. Since $X\to X_0$ is an \'etale neighborhood of $x_0$ and $f$ is $\eta$-smooth, the morphism $h$ is $\eta$-smooth in a neighborhood of $x_0$ and hence $x_0$ is not in the closure of $T_\eta$. Thus, $V(b)_\eta\cap\Spec(R)=\emptyset$ and hence $b\in R_\eta^\times$.

Step 3. {\em End of proof.} By Corollary~\ref{unitcor}, $b=u\pi_1\prod_{i=0}^m t_i^{n_i}$ with $u\in R^\times$ and $0\neq\pi_1\in\kcirc$. Shrinking $X$ we can assume that $f(X)\subset V$, where $V$ is a neighborhood of $O$ such that $u\in\calO^\times(V)$. Set $b'=b/u$ and $X'_0=\Spec(B[v,w]/(vw-b'))$. Since $X_0\times_YV$ is isomorphic to $X'_0\times_YV$, we can replace $X_0$ by $X'_0$ achieving that $b=\pi_1\prod_{i=0}^m t_i^{n_i}$. In this case, $X_0$ possesses a strictly semistable $\tau$-covering $X'_0\to X_0$ by Lemma~\ref{composlem}, and hence $X'=X'_0\times_{X_0}X$ is a required $\tau$-covering of $X$.
\end{proof}

\subsection{Algebraic altered local uniformization}
We start with a simple general lemma.

\begin{lem}\label{covlem}
Let $X$ be a quasi-compact and quasi-separated scheme with a quasi-compact open subscheme $U$ and let $U=\cup_{i=1}^nU_i$ be an open covering such that each $U_i$ is quasi-compact. Then there exists a $U$-modification $X'\to X$ and an open covering $X'=\cup_{i=1}^nX'_i$ such that $X'_i\cap U=U_i$.
\end{lem}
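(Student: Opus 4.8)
The plan is to reduce to an affine situation and then glue. First I would reduce to the case where $X$ is affine. Since $X$ is quasi-compact and quasi-separated, choose a finite affine open covering $X=\bigcup_{j=1}^N V_j$; if the statement is known for each $V_j$ together with its intersection with $U$, one can hope to patch the resulting modifications, but patching $U$-modifications of the $V_j$ is itself delicate, so instead I would argue directly via the blow-up description below, which is intrinsic enough to globalize. The key idea is to realize the desired $X'$ as a blow-up of $X$ along an ideal sheaf supported away from $U$.

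The main step is the construction of the ideal. On $U$ we have the open covering $U=\bigcup_{i=1}^n U_i$. I would like to find a coherent ideal sheaf $\calI\subseteq\calO_X$ whose zero locus is disjoint from $U$ (so that $\Bl_\calI(X)\to X$ is a $U$-modification) and such that on $X'=\Bl_\calI(X)$ the ideal $\calI\calO_{X'}$ is invertible and ``separates'' the $U_i$ in the following sense: the open sets $X'_i:=\{x\in X': \text{the stalk of }\calI\calO_{X'}\text{ at }x\text{ is generated by the section pulled back from a function vanishing outside }U_i\}$ form an open cover of $X'$ restricting to $U_i$ on $U$. Concretely, since each $U_i$ is quasi-compact and $U\setminus U_i$ is closed in $U$, one can enlarge it to a closed subset $Z_i\subseteq X$ with $Z_i\cap U = U\setminus U_i$ (take the closure, intersected appropriately using quasi-separatedness); choose coherent ideals $\calI_i$ with $V(\calI_i)=Z_i$. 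Then $\bigcap_i Z_i$ is disjoint from $U$, so $\calI:=\calI_1\cdots\calI_n$ (or $\sum$-type combination tailored so the blow-up works) has zero locus disjoint from $U$. After blowing up $\calI=\prod\calI_i$, each $\calI_i\calO_{X'}$ becomes invertible, and the locus $X'_i$ where the natural map $\calI_i\calO_{X'}\to\calO_{X'}$ (equivalently, where $\calI_i$ ``does not cut deeper than the others'') yields a trivialization covers $X'$ because at each point at least one factor must achieve the minimum; over $U$, where each $\calI_i$ already restricts to the ideal of $U\setminus U_i$, this recovers exactly $X'_i\cap U=U_i$.

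The technical heart, and the step I expect to be the main obstacle, is making the ``separation'' claim precise and verifying $X'_i\cap U=U_i$ rather than something slightly larger or smaller. The subtlety is that $U\setminus U_i$ need not be closed in $X$, only locally closed, so its closure $Z_i$ may meet $U$ in more than $U\setminus U_i$; one must instead work with an ideal that genuinely cuts out $U\setminus U_i$ \emph{inside} $U$ and is extended by the unit ideal (or something innocuous) outside a neighborhood. Here quasi-compactness of $U$ and of the $U_i$, plus quasi-separatedness of $X$, are exactly what is needed: $U\hookrightarrow X$ is quasi-compact, so the ideal defining $U\setminus U_i$ as a closed subscheme of $U$ extends to a coherent ideal on $X$ (by \cite[$\rm IV_3$, Section~8]{ega}-type arguments, or directly since $X$ is qcqs), and its zero locus meets $U$ in exactly the closure-within-$U$ of $U\setminus U_i$, which is $U\setminus U_i$ since $U_i$ is open in $U$. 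Once the $\calI_i$ are chosen with $V(\calI_i)\cap U = U\setminus U_i$, the product has empty zero locus on $U$, the blow-up is a $U$-modification, and the standard chart description of a blow-up along a product of ideals (each $\calI_i\calO_{X'}$ invertible, and at every point one factor divides all the others) furnishes the cover $X'=\bigcup X'_i$ with the required restriction to $U$. I would also remark that one may harmlessly replace $X'$ by its strict transform / flat locus if one wants further properties, but for this lemma the blow-up already suffices.
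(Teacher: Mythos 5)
Your overall strategy --- blow up a finite-type ideal whose zero locus is $X\setminus U$, chosen so that the blow-up separates the pieces $U_i$ --- is exactly the paper's, but the separating mechanism you propose does not work, and this is the heart of the lemma. You blow up the \emph{product} $\calI_1\cdots\calI_n$ and claim that afterwards ``at each point at least one factor must achieve the minimum,'' i.e.\ that the invertible ideals $\calI_i\calO_{X'}$ are totally ordered at every point. That is false. Concretely, take $X=\Spec(k[x,y])$, $U_1=D(x)$, $U_2=D(y)$, so $U=X\setminus\{0\}$ and $\calI_1=(x)$, $\calI_2=(y)$. The product $(xy)$ is already invertible, so your blow-up is the identity, and at the origin neither $(x)\subseteq(y)$ nor $(y)\subseteq(x)$ holds; correspondingly no open cover $X=X'_1\cup X'_2$ with $X'_i\cap U=U_i$ exists, since whichever $X'_i$ contains the origin must meet the opposite punctured axis. (Your alternative reading, taking $X'_i$ to be the locus where $\calI_i\calO_{X'}$ is the unit ideal, gives only $f^{-1}(U_i)$, whose union is $f^{-1}(U)$, not $X'$.) The fix --- which is what the paper does --- is to blow up the \emph{sum} $\calI=\sum_i\calI_i$: its zero locus is $\bigcap_i V(\calI_i)=X\setminus U$, so this is a $U$-modification, and on $X'=\Bl_\calI(X)$ the invertible ideal $\calI\calO_{X'}$ is locally generated by a generator of some single $\calI_{i_0}$, forcing $\calI_{i_0}\calO_{X'}=\calI\calO_{X'}$ there; hence the strict transforms $Z'_i$ of $Z_i=V(\calI_i)$ satisfy $\bigcap_iZ'_i=\emptyset$ (this is \cite[Lemma~1.4]{con}), and $X'_i=X'\setminus Z'_i$ is the required cover. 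In the example this is the blow-up of the origin, which indeed separates the two axes.

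A secondary, non-fatal point: your construction of the $Z_i$ is needlessly delicate. There is no need to extend $U\setminus U_i$ from $U$ or to worry about closures: since $U_i$ is open in $X$ and $X$ is quasi-compact and quasi-separated with $U_i$ quasi-compact, the closed set $X\setminus U_i$ is already the vanishing locus of a finite-type quasi-coherent ideal $\calI_i$, and it meets $U$ in exactly $U\setminus U_i$. With that choice and the sum-of-ideals blow-up, the verification $X'_i\cap U=U_i$ is immediate because $f$ is an isomorphism over $U$ and the strict transform of $Z_i$ restricts over $U$ to $Z_i\cap U=U\setminus U_i$.
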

\begin{proof}
By the quasi-compactness assumption, each $Z_i=X\setminus U_i$ is the vanishing locus of a finitely generated ideal $\calI_i\subset\calO_X$. The vanishing locus of $\calI=\sum_i\calI_i$ is $\cap_i Z_i=X\setminus U$, so the blow-up $f\:X'=\Bl_\calI(X)\to X$ is a $U$-modification. By \cite[Lemma~1.4]{con}, $f$ separates the strict transforms $Z'_i$ of the subschemes $Z_i$, hence the open subschemes $X'_i=X'\setminus Z'_i$ form a covering of $X'$ as required.
\end{proof}

Now, we are in a position to prove the main algebraic result of the paper.

\begin{theor}\label{mainth}
Let $k$ be a valued field of height one, $S=\Spec(\kcirc)$ and $\eta=\Spec(k)$. Assume that $X$ is an admissible $\eta$-smooth $S$-scheme. Then there exists a finite extension of valued fields $l/k$ and a covering $X'\to X\times_SS'$ for the $\eta$-\'etale $S$-admissible topology such that $X'$ is strictly semistable over $S'=\Spec(\lcirc)$.
\end{theor}
\begin{proof}
We will freely replace $X$ by $\tau$-coverings throughout the proof.

Step 1. {\em Reduction to the case of an algebraically closed $k$.} Let $K$ be the algebraic closure of $k$ provided with an extension of the valuation, and let $l_i/k$ be the finite subextensions of $K/k$ provided with the induced valuations. Then $S'=\Spec(\Kcirc)$ is the filtered limit of the schemes $S_i=\Spec(l^\circ_i)$. By \cite[$\rm IV_3$, Th\'eor\`eme~8.8.2]{ega}, any finitely presented morphism $h\:X'\to X\times_SS'$ is the base change of a morphism $h_i\:X'_i\to X\times_SS_i$ for a large enough $i$. Furthermore, if $h$ is a $\tau$-covering then already $h_j=h_i\times_{S_i}S_j$ is a $\tau$-covering for a large enough $j$: for $\eta$-\'etaleness this follows from \cite[$\rm IV_4$, Proposition~17.7.8]{ega} and for the lifting property one uses that it holds for $S'\to S_i$ and hence also for $X\times_SS'\to X\times_SS_i$. In the same manner, the two cited results imply that if $X'$ is strictly semistable over $S'$ then already $X'_j=X'_i\times_{S_i}S_j$ is strictly semistable over $S_j$ for a large enough $j$. Thus, it suffices to consider the case when $k=K$.

Step 2. {\em The problem is local on $U=X_\eta$.} Indeed, if $U=\cup_{i=1}^nU_i$ is an open covering, then by Lemma~\ref{covlem} there exists an $\eta$-modification $X'\to X$ and a covering $X'=\cup_{i=1}^nX'_i$ such that $(X'_i)_\eta=X'_i\cap U=U_i$. Since $\coprod_{i=1}^nX'_i\to X$ is a $\tau$-covering, it suffices to prove the theorem for each $X'_i$ separately.

Step 3. {\em We can assume that $X_\eta$ is smooth of pure dimension $d>0$ and there exists a morphism of admissible $S$-schemes $f\:X\to Y$ such that $Y$ and $f$ are $\eta$-smooth and $Y_\eta$ is of pure dimension $d-1$.} We can assume that $X$ is affine, say $X=\Spec(A)$. Since $X_\eta$ is smooth, we can assume that it is of pure dimension $d$ by step 2. Furthermore, if $d=0$ then $X$ is a disjoint union of schemes isomorphic to $S$ and $\eta$ since there are no intermediate subrings $\kcirc\subsetneq A\subsetneq k$. So, we can assume that $d>0$. Using step 2 again, we can assume that there exists a smooth relative curve $g\:X_\eta\to V$ such that $V=\Spec(B)$ is a smooth affine $k$-variety. Let $g^\#\:B\to A_\eta=A\otimes_{\kcirc}k$ be the induced homomorphism. Choose $k$-generators $b_1\..b_n$ of $B$ and non-zero elements $\pi_1\..\pi_n\in\kcirc$ such that $\pi_ig^\#(b_i)\in A$. Consider the $\kcirc$-subalgebra $C=\kcirc[\pi_1b_1\..\pi_nb_n]$ of $B$, then $C_\eta=B$ and $g^\#$ induces a homomorphism $f^\#\:C\to A$. The morphism $f\:X\to Y=\Spec(C)$ is as required since $f_\eta=g$.

We already established the case $d=0$, so assume that the theorem holds for $d-1$ and let us prove that it also holds for $X$. In the sequel, we can also work $\tau$-locally on $Y$: if $Y'\to Y$ is a $\tau$-covering then we can replace $f\:X\to Y$ by the strict base change $(Y'\times_YX)^\st\to Y'$.

Step 4. {\em We can assume, in addition, that $f$ is flat.} Since $f$ is $\eta$-flat we can apply the flattening theorem of Raynaud and Gruson, \cite[Part I, Th\'eor\`em~5.7.9]{RG}, to obtain an $\eta$-modification $Y'\to Y$ such that the strict base change $(Y'\times_YX)^\st\to Y'$ is flat.

Step 5. {\em We can assume, in addition, that $f$ is a nodal curve.} Since $f$ is $\eta$-smooth, the stable modification theorem \cite[Theorem~2.3.3]{temrz} provides a $\tau$-covering $Y'\to Y$ such that the $Y'$-curve $Z=X\times_YY'$ possesses a stable modification $X'$. Moreover, replacing $Y'$ by a larger $\tau$-covering $\tilY$ we can assume that it is a disjoint union of integral schemes: for example, one can simply take $\tilY$ to be the normalization of $Y'$ using the relatively difficult fact that $\tilY$ is of finite type over $S$, see \cite[Theorem~3.5.5]{temst} (a more elementary alternative is to argue straightforwardly similarly to step 2). By \cite[Corollary~1.3(ii)]{temst}, the stable modification morphism $h\:X'\to Z$ is an isomorphism over any open subscheme $U$ of $Z$ which is semistable over $Y'$. Since $Z_\eta$ is such a subscheme (it is even $Y'$-smooth), $h$ is an $\eta$-modification. So, $X'\to X$ is $\eta$-\'etale and hence a $\tau$-covering. Replacing $X$ and $Y$ by $X'$ and $Y'$ we achieve the situation when $X\to Y$ is a nodal curve.

Step 6. {\em End of proof.} By the induction assumption there exists a $\tau$-covering $Y'\to Y$ such that $Y'$ is semistable over $S$. Replacing $Y$ and $X$ by $Y'$ and $X\times_YY'$ we can assume that $Y$ is semistable, and then $X$ possesses a strictly semi-stable $\tau$-covering by Corollary~\ref{composcor}.
\end{proof}

\section{The formal and analytic cases}\label{unifsec}
In this section, $k$ is a non-archimedean analytic field, i.e. a complete real-valued field. In addition, we assume that $k$ is non-trivially valued and use the notation $\gtS=\Spf(\kcirc)$. All completions we consider are with respect to the $(\pi)$-adic topology where $\pi\in k$ satisfies $0<|\pi|<1$.

\subsection{Admissible formal $\kcirc$-schemes}
Recall that a formal $\kcirc$-scheme $\gtX$ is called {\em admissible} if it is flat and of finite type over $\gtS$.

\subsubsection{Generic fiber}
Given an admissible $\gtX$ let $\gtX_\eta^\rig$ denote the associated rigid analytic space over $k$, which is often called the generic fiber of $\gtX$. In fact, the generic fiber construction is functorial and we say that a morphism $f\:\gtY\to\gtX$ between admissible formal $\kcirc$-schemes is {\em rig-\'etale} if its generic fiber $f_\eta^\rig$ is \'etale. In the same manner we define {\em rig-smooth} admissible formal schemes.

\subsubsection{Algebraization}
As earlier, let $S=\Spec(\kcirc)$. The completion functor associates to any admissible $S$-scheme $X$ the admissible formal $\gtS$-scheme $\hatX$. We say that an admissible formal $\gtS$-scheme $\gtX$ is {\em algebraizable} if it is isomorphic to the completion of such an $X$. The following result is an easy consequence of an algebraization theorem of R. \'Elkik.

\begin{theor}\label{algth}
Any affine rig-smooth admissible formal $\kcirc$-scheme $\gtX$ is algebraizable. In addition, one can achieve that $\gtX=\hatX$ where $X$ is an affine $\eta$-smooth admissible $S$-scheme.
\end{theor}
\begin{proof}
Let $\gtX=\Spf(A)$. By \cite[Proposition 3.3.2]{temdes}, $\gtX$ is rig-smooth if and only if the Jacobian ideal $H_{A/\kcirc}$ is open (the cited result is formulated with the assumption that $k$ is discretely-valued, but it is not used in the proof). The latter means that $\gtX$ is formally smooth outside of the closed fiber in the sense of \cite[p.581]{Elk} and hence \cite[Th\'eor\`em~7 on p.582 and Remarque 2(c) on p.588]{Elk} implies that $A=\hatB$ for a finitely generated $\kcirc$-algebra $B$ such that $X=\Spec(B)$ is generically smooth over $S$. Dividing $B$ by the $\kcirc$-torsion we achieve that $B$ becomes $\kcirc$-flat keeping $\hatB$ and $X_\eta$ unchanged.
\end{proof}

\subsection{Rig-\'etale topology}
We provide the category of admissible formal $\kcirc$-schemes with the following {\em rig-\'etale topology}: a morphism $f\:\gtY\to\gtX$ is a rig-\'etale covering if its generic fiber $f_\eta^\rig$ is an \'etale covering.

\begin{rem}
(i) One might be surprised by this definition since its straightforward analogue for $S$-schemes is rather meaningless, e.g. it would define a morphism $X_\eta\to X$ to be a covering. One of the reasons that this works fine for formal schemes is that they are controlled by their generic fiber much better. For example, a morphism $f\:\gtY\to\gtX$ is proper or separated if and only if its generic fiber is so.

(ii) The rig-\'etale topology was introduced in \cite[Definition~3.11]{FRGIII}. We will not need this, but it follows from \cite[Theorem~4.6]{FRGIII} that the rig-\'etale topology is generated by flat rig-\'etale coverings and admissible formal blow-ups. In particular, this indicates that the rig-\'etale topology is the analogue of the topology $\tau$ on the category of $S$-schemes.
\end{rem}

Here is another indication that the rig-\'etale topology is an analogue of $\tau$.

\begin{lem}\label{rigcovlem}
Assume that $X$ is an admissible $S$-scheme and $f\:Y\to X$ is a $\tau$-covering. Then $\hatf\:\hatY\to\hatX$ is a rig-\'etale covering.
\end{lem}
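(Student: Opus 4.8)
The plan is to verify the two defining properties of a rig-\'etale covering for $\hatf\:\hatY\to\hatX$: that $\hatf_\eta^\rig$ is \'etale, and that it is surjective (a covering in the Tate/Raynaud sense on the generic fiber). The \'etaleness is essentially automatic. Indeed, $f$ is $\eta$-\'etale by hypothesis, so $f_\eta\:Y_\eta\to X_\eta$ is an \'etale morphism of $k$-schemes of finite type; the generic fiber $\hatf_\eta^\rig$ of the completion is obtained by an analytification-type procedure (restricting to the ``bounded'' locus), and \'etaleness is preserved under this passage since it can be checked on completed local rings. So the only real content is surjectivity of $\hatf_\eta^\rig$.

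For surjectivity, I would invoke the valuative characterization of points of the rigid generic fiber. A point of $\hatX_\eta^\rig$ is (represented by) a morphism $\Spf(R^\wedge)\to\hatX$ where $R$ is a valuation ring with $\Frac(R)$ a finite — or, after enlarging, algebraically closed — extension of a residue field of a point of $X_\eta$; equivalently, by adjunction between formal and algebraic schemes (here one uses that $\hatX$ is $\pi$-adically complete of finite type, so $\Hom(\Spf(R^\wedge),\hatX)=\Hom(\Spec(R^\wedge),X)=\Hom(\Spec(R),X)$ when $R$ is $\pi$-adically separated, or after passing to the completion of $R$), an admissible valuation $\alpha\:\Spec(R)\to X$ in the sense of Section~\ref{topsec}. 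After enlarging $R$ we may assume $\Frac(R)$ is algebraically closed. Now the lifting property~(*) in the definition of a $\tau$-covering gives an admissible valuation $\beta\:\Spec(R)\to Y$ lifting $\alpha$; completing, $\beta$ produces a point of $\hatY_\eta^\rig$ lying over the given point of $\hatX_\eta^\rig$. Hence $\hatf_\eta^\rig$ is surjective on points, and combined with \'etaleness this makes it an \'etale covering in the sense required.

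The hard part — or at least the point requiring care — will be matching up the ``admissible valuations'' used in the definition of $\tau$ with the valuation-ring points that detect surjectivity of a morphism of rigid spaces, i.e.\ checking that every point of $\hatX_\eta^\rig$ (or at least every point of each affinoid in an admissible covering, which suffices to test surjectivity) is indeed reached by an admissible valuation $\Spec(R)\to X$ with $\Frac(R)$ that may be taken algebraically closed, and that the completion $R\to R^\wedge$ does not disturb the relevant factorization. For this I would use that $\hatX=\hatX^\st$ inherits $X$'s generic points and that a point of the rigid space corresponds to a bounded valuation on an affinoid algebra, which extends to a valuation ring $R$ dominating a point of $X$; since $X_\eta$ is schematically dense in the admissible $X$, such an $\alpha$ is automatically admissible. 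Once this dictionary is in place the argument is the formal shadow of the proof that $\eta$-modifications and flat surjective $\eta$-\'etale morphisms are $\tau$-coverings, and one may alternatively deduce the lemma from Lemma~\ref{coverlem} together with the two cited facts (completions of admissible blow-ups are admissible formal blow-ups, and completions of flat surjective $\eta$-\'etale maps are flat rig-\'etale coverings), reducing to the case of each generating type of covering separately.
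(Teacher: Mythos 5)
Your proposal is correct in outline, and for the rig-\'etaleness half it is close to the paper's argument in spirit, though the paper is more careful about what the generic fiber of a completion is: it checks rig-smoothness via the Jacobian criterion ($\pi^n\in H_{B/A}$ since $f$ is $\eta$-\'etale, and $H_{\hatB/\hatA}=H_{B/A}\hatB$, then \cite[Proposition~3.3.2]{temdes}) and gets rig-\'etaleness by a dimension count, rather than appealing to preservation of \'etaleness under the generic-fiber functor. The genuine divergence is in the surjectivity half: the route you list only as an alternative is the one the paper actually takes. It invokes Lemma~\ref{coverlem} to factor the covering, after an $\eta$-modification of $Y$, through a flat surjective $\eta$-\'etale morphism followed by an admissible blow-up, and then notes that the completed blow-up is a rig-isomorphism while the completion of the flat surjective piece is faithfully flat by \cite[Lemma~1.6]{FRGI}, hence rig-surjective. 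Your primary route --- lifting each classical point of $\hatX_\eta^\rig$ directly through the valuative property (*) --- also works and is closer to the spirit of the definition of $\tau$, but the care you flag is really needed at one spot: after enlarging to a valuation ring $R$ with algebraically closed fraction field and lifting to $\beta\:\Spec(R)\to Y$, you must descend to a classical rigid point of $\hatY_\eta^\rig$, i.e.\ one whose residue field is finite over $k$. This is fine because $f_\eta$ is \'etale, so the fiber of $Y_\eta$ over the given closed point of $X_\eta$ is a disjoint union of spectra of finite separable extensions $L$ of its residue field, and $\beta$ factors through $\Spec(L^\circ)$ for one of these; completing gives the desired point. The paper's detour through Lemma~\ref{coverlem} buys exactly the avoidance of this point-chasing (surjectivity comes for free from faithful flatness) at the price of re-using the flattening theorem, whereas your route is self-contained once the dictionary between classical points and admissible valuations is pinned down.
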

\begin{proof}
First, we claim that $\hatf$ is rig-\'etale. It suffices to consider the affine case, say $f\:\Spec(B)\to\Spec(A)$. We refer to \cite[0.2]{Elk} for the definition of the Jacobian ideal $H_{B/A}$. Since the vanishing locus $V(H_{B/A})$ coincides with the non-smoothness locus of $f$, we have that $\pi^n\in H_{B/A}$ for some $n$. The topological Jacobian ideal $H_{\hatB/\hatA}$ (see \cite[p.507]{temdes}) is defined by the same generators as $H_{B/A}$, hence $H_{\hatB/\hatA}=H_{B/A}\hatB$ contains $\pi^n$ too, and the morphism $\hatY\to\hatX$ is rig-smooth by \cite[Proposition~3.3.2]{temdes}. Counting the dimensions it is easy to see that the generic dimension of $\hatf$ is zero, hence it is, in fact, rig-\'etale.

It remains to show that $\hatf$ is rig-surjective. By Lemma~\ref{coverlem} there exists an $\eta$-modification $Y'\to Y$ such that the composed morphism $Y'\to X$ factors into a composition of a flat covering $Y'\to X'$ and an admissible blow-up $X'\to X$. It suffices to prove that both completions $\hatY'\to\hatX'$ and $\hatX'\to\hatX$ are rig-surjective. Since $\hatX'\to\hatX$ is an admissible formal blow-up, it is even a rig-isomorphism. Concerning the completion of $Y'\to X'$, we can, again, work locally and hence assume that $X'=\Spec(A')$ and $Y'=\Spec(B')$. Since the homomorphisms $A'/\pi^nA'\to B'/\pi^nB'$ are faithfully flat, the completion $\hatA'\to\hatB'$ is faithfully flat by \cite[Lemma~1.6]{FRGI}. In particular, $\hatf$ is rig-surjective.
\end{proof}

\subsection{Formal altered local uniformization}
Recall that a formal $\gtS$-scheme is called {\em strictly semistable} (resp. {\em semistable}) if locally (resp. \'etale-locally) it admits an \'etale morphism to a model formal scheme of the form $$\Spf(\kcirc\{t_0\..t_l\}/(t_1\dots t_m-\pi))$$ with $0\neq\pi\in\kcirc$. For example, if $X$ is (strictly) semistable over $S$ then its completion is (strictly) semistable over $\gtS$.

\begin{theor}\label{formalth}
Assume that $k$ is a complete valued field of height one and $\gtX$ is a rig-smooth admissible formal scheme over $\gtS=\Spf(\kcirc)$. Then there exists a finite extension of valued fields $l/k$ and a rig-\'etale covering $\gtX'\to\gtX\times_\gtS\gtS'$, where $\gtS'=\Spf(\lcirc)$, such that $\gtX'$ is strictly semistable over $\gtS'$.
\end{theor}
\begin{proof}
Replacing $\gtX$ by its affine covering we can assume that it is affine. Then by Theorem~\ref{algth}, $\gtX=\hatX$ for an $\eta$-smooth admissible $S$-scheme $X$, where $S=\Spec(\kcirc)$. By Theorem~\ref{mainth} there exists a finite extension $l/k$ and a $\tau$-covering $X'\to X\times_SS'$ with a strictly semistable $X'$, where $S'=\Spec(\lcirc)$. By Lemma~\ref{rigcovlem}, passing to completions we obtain a rig-\'etale covering $\hatX'\to\hatX\times_\gtS\gtS'$. Since (strict) semistability is preserved by the completion functor, $\gtX'=\hatX'$ is as required.
\end{proof}

As a corollary, we obtain the generalization of Hartl's theorem to arbitrary ground fields.

\begin{cor}\label{Hartl}
Assume that $k$ is a complete real-valued field and $X$ is a smooth quasi-compact and quasi-separated rigid space over $k$. Then there exists a finite extension $l/k$ and an \'etale covering $X'\to X\otimes_kl$ such that $X'$ is affinoid and possesses a strictly semistable affine formal model.
\end{cor}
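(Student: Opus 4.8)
The plan is to reduce Corollary~\ref{Hartl} to the formal statement of Theorem~\ref{formalth}. First I would observe that since $X$ is quasi-compact and quasi-separated it admits a finite admissible affinoid covering, and since being affinoid with a strictly semistable affine formal model is a property that may be checked on each piece of a finite \'etale covering, the problem is local on $X$ and we may assume that $X$ itself is affinoid. Then I would pass to an admissible formal model: by Raynaud's theory, a quasi-compact quasi-separated rigid space over $k$ is the generic fiber of an admissible formal $\kcirc$-scheme $\gtX$, and since $X$ is smooth we may arrange (or simply deduce) that $\gtX$ is rig-smooth, its generic fiber $\gtX_\eta^\rig$ being $X$.

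Next I would apply Theorem~\ref{formalth} to $\gtX$: this yields a finite extension $l/k$ and a rig-\'etale covering $\gtX'\to\gtX\times_\gtS\gtS'$ with $\gtX'$ strictly semistable over $\gtS'=\Spf(\lcirc)$. Replacing $\gtX'$ by an affine covering (which is harmless, as a finite affine covering of a strictly semistable formal scheme is again strictly semistable and the induced map on generic fibers is still an \'etale covering), we may assume $\gtX'=\Spf(A')$ is affine. Then the generic fiber $X'=(\gtX')_\eta^\rig$ is affinoid, $\gtX'$ is a strictly semistable affine formal model of $X'$, and the morphism $X'\to X\otimes_kl$ induced by $\gtX'\to\gtX\times_\gtS\gtS'$ on generic fibers is an \'etale covering of rigid spaces, since the generic fiber functor takes rig-\'etale coverings to \'etale coverings by the very definition of the rig-\'etale topology. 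This is exactly the assertion of the corollary.

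The main point that needs a little care is the very first reduction step, namely checking that the conclusion — being affinoid with a strictly semistable affine formal model — is compatible with refining by a finite \'etale covering. Concretely, if $X=\cup X_i$ is a finite admissible affinoid covering and each $X_i$ admits such a cover $X_i'\to X_i\otimes_k l_i$, one enlarges $l$ to contain all the $l_i$, and then $\coprod_i (X_i'\otimes_{l_i} l)\to X\otimes_k l$ is an \'etale covering whose source is a finite disjoint union of affinoids each with a strictly semistable affine formal model; a finite disjoint union of affinoids is affinoid and a finite disjoint union of strictly semistable affine formal schemes is strictly semistable affine, so the conclusion holds for this refinement. The remaining steps are formal: existence of an admissible formal model is Raynaud's theorem, rig-smoothness of the model follows from smoothness of $X$ after possibly an admissible formal blow-up (or one simply invokes Theorem~\ref{algth} in the affine case to get an algebraizable rig-smooth model directly), and the passage back to generic fibers is immediate from the definitions.
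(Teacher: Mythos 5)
Your argument is correct and essentially identical to the paper's proof: apply Raynaud's theorem to obtain an admissible (automatically rig-smooth) formal model, invoke Theorem~\ref{formalth}, replace $\gtX'$ by an affine covering, and pass to generic fibers, using that rig-\'etale coverings induce \'etale coverings of generic fibers by definition. Your initial reduction to the affinoid case is superfluous --- Raynaud's theorem already applies to any quasi-compact quasi-separated rigid space, which is how the paper proceeds --- though it is harmless once one checks (as you do) that strict semistability of affine formal models is preserved under the ground field extensions needed to merge the $l_i$.
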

\begin{proof}
By Raynaud's theorem \cite[Theorem~4.1]{FRGI}, $X$ admits an admissible formal model $\gtX$. Find $l/k$ and $\gtX'\to\gtX\times_\gtS\gtS'$ as in Theorem~\ref{formalth}. Replacing $\gtX'$ by its affine covering we can also achieve that $\gtX'$ is affine. Passing to the generic fibers we obtain an \'etale covering $X'=\gtX'_\eta$ of $X\otimes_kl$ such that $X'$ is affinoid and possesses the strictly semistable model $\gtX'$.
\end{proof}

\subsection{Translation to Berkovich geometry}
Recall that the category of quasi-compact and quasi-separated rigid spaces over $k$ is equivalent to the category of compact strictly $k$-analytic Berkovich spaces. This equivalence matches \'etale and smooth morphisms of rigid spaces with quasi-\'etale and quasi-smooth morphisms of Berkovich spaces (the notions of \'etale and smooth morphisms are reserved in Berkovich geometry for boundaryless ones). Therefore, Corollary~\ref{Hartl} translates to the language of Berkovich spaces as follows:

\begin{theor}\label{berth}
Assume that $X$ is a quasi-smooth compact strictly $k$-analytic space. Then there exists a finite extension of analytic fields $l/k$ and a surjective quasi-\'etale morphism $X'\to X\otimes_kl$ such that $X'=\calM(\calA)$ is affinoid and its maximal affine formal model $\Spf(\calAcirc)$ is semistable over $\lcirc$.
\end{theor}

\bibliographystyle{amsalpha}
\bibliography{weak_uniformization}

\providecommand{\bysame}{\leavevmode\hbox to3em{\hrulefill}\thinspace}
\providecommand{\MR}{\relax\ifhmode\unskip\space\fi MR }
\providecommand{\MRhref}[2]{%
  \href{http://www.ams.org/mathscinet-getitem?mr=#1}{#2}
}
\providecommand{\href}[2]{#2}
\begin{thebibliography}{KKMSD73}

\bibitem[AK00]{Abramovic-Karu}
D.~Abramovich and K.~Karu, \emph{Weak semistable reduction in characteristic
  0}, Invent. Math. \textbf{139} (2000), no.~2, 241--273. \MR{1738451
  (2001f:14021)}

\bibitem[BL93]{FRGI}
Siegfried Bosch and Werner L{\"u}tkebohmert, \emph{Formal and rigid geometry.
  {I}. {R}igid spaces}, Math. Ann. \textbf{295} (1993), no.~2, 291--317.
  \MR{1202394 (94a:11090)}

\bibitem[BLR95]{FRGIII}
Siegfried Bosch, Werner L{\"u}tkebohmert, and Michel Raynaud, \emph{Formal and
  rigid geometry. {III}. {T}he relative maximum principle}, Math. Ann.
  \textbf{302} (1995), no.~1, 1--29. \MR{1329445 (97e:11074)}

\bibitem[Con07]{con}
Brian Conrad, \emph{Deligne's notes on {N}agata compactifications}, J.
  Ramanujan Math. Soc. \textbf{22} (2007), no.~3, 205--257. \MR{2356346
  (2009d:14002)}

\bibitem[dJ96]{dJ}
A.~Johan de~Jong, \emph{Smoothness, semistability and alterations}, Inst.
  hautes \'etudes sci., Publ. math. \textbf{83} (1996), 51--93.

\bibitem[Elk73]{Elk}
Ren{\'e}e Elkik, \emph{Solutions d'\'equations \`a coefficients dans un anneau
  hens\'elien}, Ann. Sci. \'Ecole Norm. Sup. (4) \textbf{6} (1973), 553--603
  (1974). \MR{0345966 (49 \#10692)}

\bibitem[Gro67]{ega}
A.~Grothendieck, \emph{\'{E}l\'ements de g\'eom\'etrie alg\'ebrique.}, Inst.
  Hautes \'Etudes Sci. Publ. Math. (1960-1967).

\bibitem[GS15]{Gubler-Soto}
Walter Gubler and Alejandro Soto, \emph{{Classification of normal toric
  varieties over a valuation ring of rank one}}, ArXiv e-prints (2015),
  \url{http://arxiv.org/abs/1303.1987v2}.

\bibitem[Har03]{Hartl}
Urs~T. Hartl, \emph{Semi-stable models for rigid-analytic spaces}, Manuscripta
  Math. \textbf{110} (2003), no.~3, 365--380. \MR{1969007 (2004a:14029)}

\bibitem[IT]{X}
Luc Illusie and Michael Temkin, \emph{Expos\'e {X}: Gabber's modification
  theorem (log smooth case)}, Travaux de Gabber sur l'uniformisation locale et
  la cohomologie \'etale des sch\'emas quasi-excellents (S\'eminaire \`a
  l'\'Ecole polytechnique 2006-2008), Ast\'erisque, vol. 363-364, edited by Luc
  Illusie, Yves Laszlo \& Fabrice Orgogozo with collaboration of F. D\'eglise,
  A. Moreau, V. Pilloni, M. Raynaud, J. Riou, B. Stroh, M. Temkin et W. Zheng,
  pp.~169--216.

\bibitem[KKMSD73]{KKMS}
G.~Kempf, Finn~Faye Knudsen, D.~Mumford, and B.~Saint-Donat, \emph{Toroidal
  embeddings. {I}}, Springer-Verlag, Berlin, 1973, Lecture Notes in
  Mathematics, Vol.~339.

\bibitem[Org]{II}
Fabrice Orgogozo, \emph{Expos\'e {II}: Topologies adapt\'ees \`a
  l'uniformisation locale}, Travaux de Gabber sur l'uniformisation locale et la
  cohomologie \'etale des sch\'emas quasi-excellents (S\'eminaire \`a l'\'Ecole
  polytechnique 2006-2008), Ast\'erisque, vol. 363-364, edited by Luc Illusie,
  Yves Laszlo \& Fabrice Orgogozo with collaboration of F. D\'eglise, A.
  Moreau, V. Pilloni, M. Raynaud, J. Riou, B. Stroh, M. Temkin et W. Zheng,
  pp.~21--36.

\bibitem[RG71]{RG}
Michel Raynaud and Laurent Gruson, \emph{Crit\`eres de platitude et de
  projectivit\'e. {T}echniques de ``platification'' d'un module}, Invent. Math.
  \textbf{13} (1971), 1--89. \MR{0308104 (46 \#7219)}

\bibitem[Tem08]{temdes}
Michael Temkin, \emph{Desingularization of quasi-excellent schemes in
  characteristic zero}, Adv. Math. \textbf{219} (2008), no.~2, 488--522.
  \MR{2435647 (2009h:14027)}

\bibitem[Tem10]{temst}
\bysame, \emph{Stable modification of relative curves}, J. Algebraic Geom.
  \textbf{19} (2010), no.~4, 603--677. \MR{2669727 (2011j:14064)}

\bibitem[Tem11]{temrz}
\bysame, \emph{Relative {R}iemann-{Z}ariski spaces}, Israel J. Math.
  \textbf{185} (2011), 1--42. \MR{2837126}

\bibitem[Tem16]{Temkintopforms}
\bysame, \emph{Metrization of differential pluriforms on {B}erkovich analytic
  spaces}, Nonarchimedean and Tropical Geometry, Simons Symposia, Springer,
  2016, pp.~195--285.

\end{thebibliography}

\end{document}